\newtheorem{theorem}{Theorem}
\newtheorem{lemma}[theorem]{Lemma}
\newtheorem{remark}[theorem]{Remark}
\newtheorem{definition}[theorem]{Definition}
\newtheorem{corollary}[theorem]{Corollary}
\newtheorem{proposition}[theorem]{Proposition}
\newcommand{\Tr}{\operatorname{Tr}}
\newcommand{\id}{\operatorname{id}}
\newcommand{\EE}{\mathbb{E}}
\newcommand{\R}{\mathbb{R}}
\newenvironment{manualtheorem}[1]{%
  \manualtheoreminner
}{\endmanualtheoreminner}
 \newcommand{\bary}{\mathsf{bar}}
\title{Generalized  Blaschke--Santal\'o-type inequalities, \\
without symmetry restrictions}
\author{Thomas A.~Courtade and Edric Wang \\University of California, Berkeley  }
\date{\today}
\begin{document}

\maketitle

\begin{abstract}
Nakamura and Tsuji (2024) recently investigated a many-function generalization of the functional Blaschke--Santal\'o inequality, which they refer to as a generalized Legendre duality relation.  They showed that, among the class of all even test functions, centered Gaussian functions saturate this general family of functional inequalities.  Leveraging a certain entropic duality, we give a short alternate proof of Nakamura and Tsuji's result, and, in the process, eliminate all symmetry assumptions.  As an application, we establish  a Talagrand-type inequality for the Wasserstein barycenter problem (without symmetry restrictions) originally  conjectured by Kolesnikov and Werner (\textit{Adv.~Math.}, 2022). An analogous geometric Blaschke--Santal\'o-type inequality is established for many convex bodies, again without symmetry assumptions.
\end{abstract}

\section{Introduction and Main Results}

Let $E_i \equiv \mathbb{R}^{d_i}$, $i=1, \dots, n$ be Euclidean spaces, and put $E = \oplus_{i=1}^n E_i$.  Let $\pi_i : E \to E_i$ denote the canonical coordinate projection of $E$ onto $E_i$, and for $x\in E$, define $x_i:=\pi_i(x)$. Let $Q$ be a fixed symmetric matrix on $E$, and also fix positive reals $\mathbf{c} = (c_i)_{i=1}^n$. Let $\mathcal{P}(E_i)$ denote the set of Borel probability measures on $E_i$, absolutely continuous with respect to Lebesgue measure, and having finite second moments.  If $\mu_i \in \mathcal{P}(E_i)$ is a probability measure on $E_i$ with density proportional to $e^{-f_i}\in L^1(E_i)$, then its Shannon entropy is given by
$$
h(\mu_i):= \log \left( \int_{E_i} e^{-f_i} dx_i\right)  + \int_{E_i}  f_i d\mu_i. 
$$
It is well-known that finiteness of second moments ensures the second integral exists in the Lebesgue sense; in particular,    $h(\mu_i)<\infty$.

With  $(\mathbf{c},Q)$ fixed,  we shall primarily consider   multimarginal entropy inequalities of the form  
\begin{align}
 \sum_{i=1}^n c_i h(\mu_i)   \leq  \sup_{\mu \in \pi(\mu_1, \dots, \mu_n) } \int_E \langle x, Q x\rangle d\mu  + D, \label{eq:mmEntropy}
\end{align} 
and characterize the sharp constant $D$.  Here, the supremum is over all couplings $\mu$ of $(\mu_i)_{i=1}^n$; that is, $\mu$ is a probability measure on $E$ with marginals $\pi_i \sharp \mu = \mu_i$ for $i=1,\dots,n$.   This motivates the following definitions.
\begin{definition}
Let $D(\mathbf{c},Q)$ denote the smallest constant $D$ such that \eqref{eq:mmEntropy} holds for all centered $\mu_i \in \mathcal{P}(E_i)$, $i=1,\dots, n$.   Similarly, define $D_g(\mathbf{c},Q)$ to be the smallest constant $D$ such that \eqref{eq:mmEntropy} holds for all centered Gaussian measures $(\mu_i)_{i=1}^n$.  
 \end{definition}
We remark that the centering assumption built into the above definitions comes without any loss of generality, and can be eliminated by subtracting $\langle \tau, Q \tau\rangle$ from the RHS of \eqref{eq:mmEntropy}, where $\tau = (\tau_1, \dots, \tau_n) \in E$ and $\tau_i$ is the mean of $\mu_i$.   
 
 Our main result is the following Gaussian saturation principle.
 \begin{theorem}\label{thm:GaussianSaturation}
 For any $(\mathbf{c},Q)$, it holds that $D(\mathbf{c},Q)=D_g(\mathbf{c},Q)$.
 \end{theorem}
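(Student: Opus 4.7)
The direction $D_g \le D$ is immediate since centered Gaussians form a subclass of centered measures in $\mathcal{P}(E_i)$. For $D \le D_g$, the plan is to dualize both the inner transport supremum (via Kantorovich duality) and the Shannon entropies (via the Gibbs/Donsker--Varadhan variational principle), reducing the desired entropic inequality to an equivalent ``dual-side'' generalized Blaschke--Santal\'o-type inequality.

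Fix centered $(\mu_i)_{i=1}^n$ and admissible potentials $(\phi_i)$ satisfying $\sum_i \phi_i(x_i) \geq \langle x, Qx\rangle$ for all $x \in E$. Kantorovich duality for the multi-marginal optimal transport problem with cost $\langle x, Qx\rangle$ gives
\[
\sup_{\mu \in \pi(\mu_1,\ldots,\mu_n)} \int_E \langle x, Qx\rangle\, d\mu \;=\; \inf_{\phi\ \text{adm.}}\ \sum_{i=1}^n \int_{E_i} \phi_i\, d\mu_i.
\]
The Gibbs inequality $c_i h(\mu_i) - \int \phi_i\, d\mu_i \leq c_i \log \int e^{-\phi_i/c_i}\, dx_i$, applied termwise and summed, then yields
\[
\sum_i c_i h(\mu_i) - \sup_\mu \int \langle x, Qx\rangle\, d\mu \;\leq\; \sup_{\phi\ \text{adm.}} \sum_i c_i \log \int_{E_i} e^{-\phi_i/c_i}\, dx_i
\]
after rearranging and taking a supremum over admissible $(\phi_i)$ on the right. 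Maximizing over centered $(\mu_i)$ on the left gives
\[
D(\mathbf{c}, Q) \;\leq\; \sup_{\phi\ \text{adm.}} \sum_i c_i \log \int_{E_i} e^{-\phi_i/c_i}\, dx_i. \quad (\star)
\]
A parallel semidefinite-programming duality for the Gaussian transport cost identifies $D_g(\mathbf{c}, Q)$ with the same right-hand side restricted to \emph{quadratic} admissible $(\phi_i)$. Hence $D = D_g$ reduces to showing that the supremum in $(\star)$ is attained by quadratic potentials, i.e., the generalized Blaschke--Santal\'o-type inequality without any evenness hypothesis.

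The main obstacle is establishing this Gaussian saturation in the non-symmetric setting. Nakamura--Tsuji's symmetrization is unavailable: replacing $\phi_i$ by $(\phi_i(x_i) + \phi_i(-x_i))/2$ preserves admissibility but \emph{decreases} $\int e^{-\phi_i/c_i}$, which is the wrong direction for an upper bound on the non-symmetric supremum. The plan is to handle non-evenness through the centering of $(\mu_i)$: encoding the mean-zero constraint via a Lagrange multiplier $\lambda = (\lambda_i) \in E$ replaces each $\phi_i(x_i)$ by $\phi_i(x_i) - \langle \lambda_i, x_i\rangle$ in $(\star)$, under an outer infimum over $\lambda$. Completing the square in $\lambda$ and translating the integration variables absorbs the first-moment mismatch into an additive constant of the quadratic target, after which a direct variational analysis of the resulting centered problem identifies quadratic $\phi_i$ as the maximizer. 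A Sion-type minimax exchange of $\sup_\phi \inf_\lambda$ with $\inf_\lambda \sup_\phi$, justified by the convexity of $\phi \mapsto c \log \int e^{-\phi/c}$ and compactness of the Lagrange parameter near the optimum, closes the loop.
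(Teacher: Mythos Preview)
Your duality reduction up to $(\star)$ is correct and mirrors the easy direction of the paper's Theorem~\ref{thm:EntropicDuality}. The gap is everything after that. You have reduced $D\le D_g$ to showing that the supremum in $(\star)$, taken over \emph{all} admissible potentials, is achieved by quadratics; this is precisely the generalized Blaschke--Santal\'o saturation statement (the paper's Theorem~\ref{thm:dualityAsymm} without the barycenter correction), and you have not proved it. Your proposed mechanism does not close the gap: once the $(\mu_i)$ have been optimized away in deriving $(\star)$, there is no mean-zero constraint left to dualize, so the Lagrange-multiplier step has nothing to act on. Even if you reinstate the $(\mu_i)$ and introduce $\lambda$, the minimax exchange you invoke is not justified (Sion requires compactness or semicontinuity hypotheses you have not verified for the infinite-dimensional $\phi$-variable), and more importantly the phrase ``a direct variational analysis of the resulting centered problem identifies quadratic $\phi_i$ as the maximizer'' is the entire content of the theorem, not a step one can gloss. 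Even in the centered/even case this is Nakamura--Tsuji's main theorem, which is nontrivial and which you do not reprove.

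For contrast, the paper never attempts to show Gaussian saturation on the functional side directly. It stays on the entropy side and proceeds in two steps. First, it shows $D=D_s$ via a doubling trick: for i.i.d.\ $X_i,X_i'\sim\mu_i$, the conditional law of $X_i^-=(X_i-X_i')/\sqrt{2}$ given $X_i^+=(X_i+X_i')/\sqrt{2}$ is symmetric, so the chain rule $h(X_i)=\tfrac12 h(X_i^+)+\tfrac12 h(X_i^-\mid X_i^+)$ lets one apply the inequality with constant $D_s$ to the conditional piece; rotation invariance of the quadratic form recombines the transport terms. Second, it shows $D_s=D_g$ by using the duality to restrict to log-concave measures, extracting extremizers by $W_2$-compactness of second-moment-bounded log-concave families, and then observing that the doubling trick applied to symmetric extremizers produces new extremizers with the same covariances, so iteration plus the entropic CLT forces Gaussianity. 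None of these ingredients appear in your sketch; what you call a ``direct variational analysis'' would have to replace all of them.
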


 This result has a dual formulation as a family of functional inequalities, which we explain next.  First, we say that  a Borel function $f_i : E_i \to \mathbb{R}$, has well-defined  barycenter
$$
\bary(f_i) := \frac{1}{\int_{E_i} e^{-f_i} dx_i}\int_{E_i} x_i e^{-f_i} dx_i, 
$$
if both integrals are finite.  For   $f_i : E_i \to \mathbb{R}$, $i=1, \dots, n$, with well-defined barycenters, the concatenation   $(\bary(f_1), \dots, \bary(f_n))$ is a vector in $E$, and we may define the functional 
$$
\mathcal{Q}(f_1, \dots, f_n) := \big\langle (\bary(f_1), \dots, \bary(f_n)), Q\,  (\bary(f_1), \dots, \bary(f_n))\big\rangle.  
$$

The following is our second main result.
\begin{theorem}\label{thm:dualityAsymm}
If   Borel functions $f_i: E_i \to \mathbb{R} \cup\{+\infty\}$   are such that $\bary(f_i)$ is well-defined and 
\begin{align}
\sum_{i=1}^n c_i f_i(x_i) \geq  \langle x, Q x\rangle, ~~~~ \forall x=(x_1, \dots, x_n) \in E, \label{eq:pointwiseInequalityIntro}
\end{align}
then 
\begin{align}
 \prod_{i=1}^n \left( \int_{E_i}e^{-f_i} d x_i\right)^{c_i} \leq e^{D_g(\mathbf{c},Q)-\mathcal{Q}(f_1, \dots, f_n)}.  \label{IntegralIneqIntro}
\end{align}
This inequality is sharp.  
\end{theorem}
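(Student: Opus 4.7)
The plan is to deduce Theorem~\ref{thm:dualityAsymm} from the entropy inequality of Theorem~\ref{thm:GaussianSaturation} via a Gibbs-type variational duality. To each $f_i$ I associate the probability measure $\mu_i \in \mathcal{P}(E_i)$ with density $e^{-f_i}/\int_{E_i} e^{-f_i} dx_i$, assuming initially that $0 < \int e^{-f_i} dx_i < \infty$ and that $\mu_i$ has finite second moments; the general case should then follow from a perturbation $f_i \mapsto f_i + \epsilon |x_i|^2$ with $\epsilon \downarrow 0$, which preserves \eqref{eq:pointwiseInequalityIntro} upon replacing $Q$ by $Q + \epsilon I$.

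The core calculation is short. The identity $h(\mu_i) = \log \int e^{-f_i} dx_i + \int f_i \, d\mu_i$ rearranges to $c_i \log \int e^{-f_i} dx_i = c_i h(\mu_i) - c_i \int f_i \, d\mu_i$. Integrating the pointwise hypothesis \eqref{eq:pointwiseInequalityIntro} against any coupling $\mu \in \pi(\mu_1,\ldots,\mu_n)$ and taking supremum (the left side being independent of $\mu$) yields
\[
\sum_i c_i \int f_i \, d\mu_i \;\geq\; \sup_{\mu \in \pi(\mu_1,\ldots,\mu_n)} \int_E \langle x, Qx\rangle \, d\mu.
\]
Since $\bary(f_i)$ equals the mean $\tau_i$ of $\mu_i$, the centered measures $\tilde\mu_i := (\mathrm{id} - \tau_i)_\sharp \mu_i$ satisfy $h(\tilde\mu_i) = h(\mu_i)$, and a direct translation calculation gives
\[
\sup_{\mu \in \pi(\mu_1,\ldots,\mu_n)} \int \langle x, Qx\rangle \, d\mu \;=\; \sup_{\tilde\mu \in \pi(\tilde\mu_1,\ldots,\tilde\mu_n)} \int \langle x, Qx\rangle \, d\tilde\mu \;+\; \mathcal{Q}(f_1,\ldots,f_n).
\]
Applying Theorem~\ref{thm:GaussianSaturation} to $(\tilde\mu_i)$ and combining the three displays (canceling the common $\sum_i c_i \int f_i \, d\mu_i$) produces
\[
\sum_{i=1}^n c_i \log \int_{E_i} e^{-f_i} dx_i \;\leq\; D_g(\mathbf{c},Q) - \mathcal{Q}(f_1,\ldots,f_n),
\]
which is \eqref{IntegralIneqIntro} after exponentiation.

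The main obstacle I anticipate is the reduction to the regular case. Writing the perturbation argument carefully requires showing that $\int e^{-(f_i + \epsilon |x_i|^2)} \to \int e^{-f_i}$ and $\bary(f_i + \epsilon |x_i|^2) \to \bary(f_i)$ as $\epsilon \downarrow 0$ (by monotone/dominated convergence), as well as continuity of $D_g(\mathbf{c},\cdot)$ at $Q$, which should follow from its explicit Gaussian characterization. Regarding sharpness: the chain of inequalities above is essentially reversible via Gibbs' identity $\log \int e^{-f_i} dx_i = \sup_{\nu_i} \{h(\nu_i) - \int f_i \, d\nu_i\}$, so if \eqref{IntegralIneqIntro} held with some constant $D' < D_g(\mathbf{c},Q)$ in place of $D_g(\mathbf{c},Q)$, then running the argument in reverse would force $D(\mathbf{c},Q) \leq D' < D_g(\mathbf{c},Q)$, contradicting Theorem~\ref{thm:GaussianSaturation}.
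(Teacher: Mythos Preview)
Your core argument is correct and matches the paper's: define $\mu_i \propto e^{-f_i}$, use the entropy identity $h(\mu_i) = \log\int e^{-f_i} + \int f_i\,d\mu_i$, integrate the pointwise constraint against a coupling, center via translation to produce the $\mathcal{Q}$ correction, apply Theorem~\ref{thm:GaussianSaturation}, and cancel. The sharpness sketch via the Gibbs variational formula is likewise the right idea (it is essentially the $(A)\Rightarrow(B)$ direction of Theorem~\ref{thm:EntropicDuality}, which the paper simply cites).

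The one place you differ from the paper is the regularization, and here the paper's route is cleaner than yours. Rather than perturbing $f_i\mapsto f_i+\epsilon|x_i|^2$, the paper truncates each $f_i$ to $+\infty$ outside a compact set; this preserves \eqref{eq:pointwiseInequalityIntro}, forces finite second moments and $f_i\in L^1(\mu_i)$ (since the constraint with $x_j=0$ for $j\neq i$ bounds $f_i$ below on compacta), and the passage to the limit is just monotone convergence in $\int e^{-f_i}$. Your perturbation has two small wrinkles: adding $\epsilon|x_i|^2$ to each $f_i$ shifts the constraint to $Q+\epsilon\,\diag(c_i\id_{E_i})$, not $Q+\epsilon\id$; and you then need right-continuity of $D_g(\mathbf{c},\cdot)$ along that path, which is true (the argument of Lemma~\ref{lem:PropertiesD} goes through for any positive-semidefinite perturbation) but not literally stated. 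None of this is fatal, but the truncation avoids the detour entirely.
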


 Inequalities of the above form may be considered as generalized Blaschke--Santal\'o-type inequalities.  In a recent paper, Nakamura and Tsuji \cite{NT} considered this class of functional inequalities for the special case of even functions, and established the same Gaussian saturation principle for \eqref{IntegralIneqIntro} (in the case of even functions, the barycenter correction $\mathcal{Q}(f_1, \dots, f_n)$ vanishes).  Our contribution is three-fold.  First, we establish Gaussian saturation for the above family of functional inequalities; this removes the restriction to even functions imposed in \cite[Theorem 1.1]{NT}.  Second, we establish the formal duality between the above inequalities and the entropy inequalities \eqref{eq:mmEntropy}, in the sense that both classes of inequalities share the same sharp constant and Gaussian saturation property.  Finally,  our proof of Gaussian saturation proceeds by analyzing the entropy inequalities \eqref{eq:mmEntropy}.  Not only does this extend Nakamura and Tsuji's recent result, it gives a fundamentally different, and arguably simpler, proof.  
 
 \begin{remark}
The formal duality between Theorems \ref{thm:GaussianSaturation} and \ref{thm:dualityAsymm} parallels what is known for Brascamp--Lieb inequalities \cite{carlen2009subadditivity} and their forward-reverse generalizations \cite{CL21, LCCV}.  
\end{remark}

 \section{Applications}
 Motivated by a problem initially proposed by Kolesnikov and Werner \cite{KW}, we give a few brief applications.
 \subsection{Application to Wasserstein barycenters}
 Let $\gamma$ be the standard Gaussian measure on $\mathbb{R}^d$.  Let $W_2 : \mathcal{P}(\mathbb{R}^d) \times  \mathcal{P}(\mathbb{R}^d) \to \mathbb{R}_{\geq 0}$ denote the quadratic Wasserstein distance, and $D(\mu \| \gamma)$ denote the relative entropy of $\mu \in \mathcal{P}(\mathbb{R}^d)$ with respect to $\gamma$.   The first   application of Theorem \ref{thm:GaussianSaturation} is to obtain the following estimate,  which extends    Talagrand's  entropy-transport inequality \cite{Tal}.
 \begin{theorem}\label{thm:BaryTal} Let $\mu_1, \dots, \mu_n \in \mathcal{P}(\mathbb{R}^d)$ be centered, and let $\lambda_i >  0$ satisfy $\sum_{i=1}^n \lambda_i = 1$.  It holds that  
\begin{align}
  \min_{\mu \in \mathcal{P}(\mathbb{R}^d)}   \sum_{i=1}^n  \lambda_i W_2(\mu_i,\mu)^2  &\leq  2  \sum_{i=1}^n  \lambda_i (1- \lambda_i ) D(\mu_i \| \gamma). \label{eq:TalBarycenter}
\end{align}
Equality holds if and only if: (i) for $n=2$, $\mu_1 = N(0,C)$ and $\mu_2 = N(0,C^{-1})$ for positive definite $C$; (ii) for $n\geq 3$,  $\mu_i  = N(0,\id)$ for each $i=1,\dots, n$.  
\end{theorem}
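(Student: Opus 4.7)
The plan is to recast \eqref{eq:TalBarycenter} as a special case of the multimarginal entropy inequality \eqref{eq:mmEntropy}, reduce to centered Gaussians via Theorem \ref{thm:GaussianSaturation}, and then verify the Gaussian version by direct computation.

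First I would invoke the Agueh--Carlier multimarginal representation of the Wasserstein barycenter cost,
\[
\min_{\mu\in\mathcal{P}(\mathbb{R}^d)}\sum_i \lambda_i W_2(\mu_i,\mu)^2 \;=\; \inf_{\pi \in \Pi(\mu_1,\dots,\mu_n)} \int \sum_{i<j} \lambda_i \lambda_j |x_i - x_j|^2\, d\pi,
\]
which follows from gluing couplings and the pointwise identity $\min_y \sum_i \lambda_i |x_i-y|^2 = \sum_{i<j}\lambda_i\lambda_j|x_i-x_j|^2$. Expanding $D(\mu_i\|\gamma) = -h(\mu_i) + \tfrac{1}{2}\int|x_i|^2 d\mu_i + \tfrac{d}{2}\log(2\pi)$ and using the identity (valid since $\sum_i\lambda_i=1$)
\[
\sum_i \lambda_i(1-\lambda_i)|x_i|^2 - \sum_{i<j}\lambda_i\lambda_j|x_i-x_j|^2 \;=\; \Big|\sum_i \lambda_i x_i\Big|^2 - \sum_i\lambda_i^2 |x_i|^2,
\]
along with $\int|x_i|^2 d\mu_i = \int|x_i|^2 d\pi$ for any coupling $\pi$, the target \eqref{eq:TalBarycenter} rearranges into
\[
\sum_i 2\lambda_i(1-\lambda_i) h(\mu_i) \;\leq\; \sup_{\pi\in\Pi(\mu_i)} \int \langle x, Q x\rangle\, d\pi \;+\; d\log(2\pi)\sum_i \lambda_i(1-\lambda_i),
\]
where $\langle x, Q x\rangle := |\sum_i \lambda_i x_i|^2 - \sum_i \lambda_i^2 |x_i|^2$ on $E=(\mathbb{R}^d)^n$. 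This is exactly \eqref{eq:mmEntropy} with $c_i = 2\lambda_i(1-\lambda_i)$, so by Theorem \ref{thm:GaussianSaturation} it suffices to verify it for centered Gaussians $\mu_i = N(0,C_i)$.

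For centered Gaussians, the Wasserstein barycenter is itself a centered Gaussian $N(0,C^*)$ (Agueh--Carlier), with $C^*$ solving the fixed-point equation $C^* = \sum_i \lambda_i (C^{*1/2}C_iC^{*1/2})^{1/2}$ and $\sum_i\lambda_i W_2(\mu_i,N(0,C^*))^2 = \Tr(\sum_i \lambda_i C_i) - \Tr C^*$. Substituting $h(N(0,C_i)) = \tfrac{d}{2}\log(2\pi e) + \tfrac{1}{2}\log\det C_i$, the $\log(2\pi)$ terms cancel and the Gaussian inequality reduces to the matrix inequality
\[
\Tr C^* \;\geq\; d\sum_i \lambda_i(1-\lambda_i) + \sum_i \lambda_i^2 \Tr C_i + \sum_i \lambda_i(1-\lambda_i)\log\det C_i.
\]
In the commuting (e.g.\ simultaneously diagonalizable) case, eigen-decomposition reduces this to the pointwise scalar bound $\sum_{i<j}\lambda_i\lambda_j\bigl[2\sqrt{v_iv_j} - 2 - \log(v_iv_j)\bigr] \geq 0$, each summand being nonnegative by $t-1-\log t\geq 0$ at $t=\sqrt{v_iv_j}$.

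The main obstacle is extending the matrix inequality to non-commuting $C_i$, where $C^*$ lacks a closed form. A promising route is to use $\Tr C^* = \sum_i \lambda_i \Tr(C^{*1/2}C_iC^{*1/2})^{1/2} = \sum_i \lambda_i \sum_k \sqrt{\sigma_k^{(i)}}$ (with $\sigma_k^{(i)}$ the eigenvalues of $C_i C^*$) together with pairwise applications of $t-1-\log t\geq 0$ and trace bookkeeping derived from the fixed-point equation. The equality case then emerges from tracking when every pairwise summand vanishes: $t=1$ pairwise forces products $v_iv_j$ to equal $1$, which for $n=2$ gives $C_1 C_2 = I$ (i.e.\ $\mu_2 = N(0,C_1^{-1})$), while for $n\geq 3$ consistency across triples collapses to $C_i = I$ for all $i$.
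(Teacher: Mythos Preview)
Your reduction to the Gaussian case via Theorem~\ref{thm:GaussianSaturation} is a legitimate route---the paper itself notes that the inequality follows once $D_g(\mathbf{c},Q)$ is identified for this $(\mathbf{c},Q)$. But your proposal leaves a genuine gap at the Gaussian step: you establish the matrix inequality
\[
\Tr C^* \;\geq\; d\sum_i \lambda_i(1-\lambda_i) + \sum_i \lambda_i^2 \Tr C_i + \sum_i \lambda_i(1-\lambda_i)\log\det C_i
\]
only in the simultaneously-commuting case, and you explicitly flag the non-commuting case as ``the main obstacle,'' offering only a heuristic involving $\Tr C^* = \sum_i\lambda_i\sum_k\sqrt{\sigma_k^{(i)}}$ and ``pairwise applications of $t-1-\log t\geq 0$.'' That is not a proof: when the $C_i$ do not commute, the eigenvalues $\sigma_k^{(i)}$ of $C_iC^*$ are coupled through the fixed-point equation in a way that does not obviously reduce to scalar inequalities, and your bookkeeping sketch does not explain how the cross-terms are controlled. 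The equality-case analysis inherits the same incompleteness.

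The paper sidesteps this obstacle entirely by giving a direct inductive proof of the equivalent multimarginal statement (Theorem~\ref{thm:multimarginal}) that does not invoke Theorem~\ref{thm:GaussianSaturation} and never isolates the Gaussian case. The induction step rests on three ingredients: the quadratic decomposition $\sum_{i<j\leq n+1}\lambda_i\lambda_j|x_i-x_j|^2 = \bar\lambda_{n+1}\lambda_{n+1}|x_{n+1}-\bar x|^2 + \bar\lambda_{n+1}\sum_{i<j\leq n}\lambda_i'\lambda_j'|x_i-x_j|^2$; the symmetrized Talagrand inequality $\tfrac{1}{2}W_2(\mu_{n+1},\bar\mu)^2 \leq D(\mu_{n+1}\|\gamma)+D(\bar\mu\|\gamma)$ applied to $\mu_{n+1}$ and the Wasserstein barycenter $\bar\mu$ of the first $n$ marginals; and the displacement-convexity estimate $D(\bar\mu\|\gamma)\leq \sum_i\lambda_i'D(\mu_i\|\gamma)-\tfrac{1}{2}\inf_\pi\EE\bigl[\sum_{i<j}\lambda_i'\lambda_j'|X_i-X_j|^2\bigr]$. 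Chaining these with the inductive hypothesis and a weight identity closes the induction for \emph{all} centered $\mu_i$ at once, and the equality characterization drops out from the known equality cases of the symmetrized Talagrand inequality applied across pairs---no matrix analysis required. If you want to salvage your route, note that this same induction, restricted to Gaussians, would prove your matrix inequality; but at that point you may as well run it for general measures.
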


The minimization problem on the left hand side of \eqref{eq:TalBarycenter} is known as the Wasserstein barycenter problem with weights $(\lambda_i)_{i=1}^n$ \cite{AC}.  A unique minimizer always exists, and is called the \emph{Wasserstein barycenter} of $(\mu_i)_{i=1}^n$ for  weights $(\lambda_i)_{i=1}^n$.  Subsequent to its introduction in \cite{AC}, the Wasserstein barycenter problem has generated    significant theoretical and practical interest; see, e.g., \cite[Ch. 9]{PC}, and references therein.  Motivated by barycenter problems and their connection to Blaschke--Santal\'o-type inequalities through duality, Kolesnikov and Werner  conjectured and established certain Talagrand-type estimates for Wasserstein barycenters in \cite{KW}.  In particular, Kolesnikov and Werner proved the special case of \eqref{eq:TalBarycenter} under the strong symmetry assumptions that the $\mu_i$'s are unconditional (i.e., each $\mu_i$ is symmetric in every coordinate), and the weights $(\lambda_i)_{i=1}^n$ are equal \cite[Theorem 7.1]{KW}.  Very recently, Nakamura and Tsuji showed that Kolesnikov and Werner's result holds under the weaker assumption that the $\mu_i$'s are symmetric \cite[Theorem 4.5]{NT} (i.e., $\mu_i(B) = \mu_i(-B)$ for Borel $B\subset \R^d$).  Hence, our Theorem \ref{thm:BaryTal}  generalizes Nakamura and Tsuji's Talagrand-type Wasserstein  barycenter inequalities, where the symmetry assumptions are weakened to simple centering, which comes without any essential loss of generality (see the remark following the statement of Theorem \ref{thm:multimarginal}, below).  In addition to the greater generality afforded, our proof  is also   simpler than those  for the symmetric cases considered in \cite{KW,NT}.

From  Agueh and Carlier's seminal work \cite{AC}, it is well-known that there is a close connection between the Wasserstein barycenter problem and a multimarginal optimal transport considered by Gangbo  and A.~\'{S}wi\c{e}ch in \cite{GC}, in the sense that for $X_i \sim \mu_i \in \mathcal{P}(\mathbb{R}^d)$, $i=1,\dots, n$, 
\begin{align}
 \min_{\mu \in \mathcal{P}(\mathbb{R}^d)}   \sum_{i=1}^n  \lambda_i W_2(\mu_i,\mu)^2  = \min_{\pi(X_1, \dots, X_n)} \EE\left[  \sum_{1 \leq i < j \leq n} \lambda_i \lambda_j |X_i - X_j|^2  \right], \label{barycenterMultimarginalEquivalence}
\end{align}
where the minimum on the right side is over couplings of random vectors $X_i \sim \mu_i$, $i=1, \dots, n$, on $(\mathbb{R}^d)^n$.   Moreover, if $\pi^{\star}$ denotes the optimal coupling on the right hand side (which is  uniquely attained), then the Wasserstein barycenter is equal to the pushforward $ T{\sharp} \pi^{\star}$, with $T(x_1, \dots, x_n) := \sum_{i=1}^n \lambda_i x_i$.   These statements can be found in \cite[Theorem 4.1 and Proposition 4.2]{AC}, with the first cited result being attributed to \cite{GC}.   See also \cite[Theorem 2.4]{KW}. In view of  this, Theorem \ref{thm:BaryTal} has an equivalent formulation as follows, which we shall prefer to work with.
\begin{manualtheorem}{4'}\label{thm:multimarginal} Let $X_i \sim \mu_i \in \mathcal{P}(\R^d)$, $i=1, \dots ,n $ be centered, and let $\lambda_i >  0$ satisfy $\sum_{i=1}^n \lambda_i = 1$.  We have  
\begin{align*}
\frac{1}{2} \inf_{\pi(X_1, \dots, X_{n})} \EE\left[  \sum_{1 \leq i < j \leq n} \lambda_i \lambda_j |X_i - X_j|^2  \right]  &\leq   \sum_{i=1}^n  \lambda_i (1- \lambda_i ) D(\mu_i \| \gamma).
\end{align*}
Equality holds if and only if: (i) for $n=2$,  $\mu_1 = N(0,C)$ and $\mu_2 = N(0,C^{-1})$ for positive definite $C$; (ii) for $n\geq 3$, $\mu_i =   N(0,\id)$ for each $i=1,\dots, n$.  
\end{manualtheorem}

To see how Theorem  \ref{thm:multimarginal}  follows from Theorem \ref{thm:GaussianSaturation}, note that we can rewrite the relative entropies in terms of Shannon entropies and second moments to reveal the equivalent inequality
$$
\sum_{i=1}^n \lambda_i (1-\lambda_i) h(\mu_i) \leq \sup_{\pi(X_1, \dots, X_{n})} \EE\left[  \frac{1}{2}\sum_{1 \leq i < j \leq n} \lambda_i \lambda_j \langle X_i , X_j\rangle  \right] + \frac{d \log(2\pi)}{2}\sum_{i=1}^n \lambda_i (1-\lambda_i). 
$$
This is of the form \eqref{eq:mmEntropy}, so the inequality of Theorem \ref{thm:multimarginal} reduces to establishing the identity 
$$
D_g(\mathbf{c},Q) =  \frac{d \log(2\pi)}{2}\sum_{i=1}^n \lambda_i (1-\lambda_i)
$$
for the appropriate choice of  $(\mathbf{c},Q)$.  This computation can be done explicitly, however we shall instead give a short direct proof of Theorem \ref{thm:multimarginal} in Section \ref{sec:BarycenterProof} which is of potential independent interest and has the benefit of characterizing the equality cases.

\subsection{Application to geometry}
With the dual equivalence between sharp constants in \eqref{eq:mmEntropy} and \eqref{IntegralIneqIntro} in hand, we record here the fact that our Talagrand-type estimate for Wasserstein barycenters is formally equivalent to the following generalization of the functional  Blaschke--Santal\'{o}  inequality. This follows by expressing the relative entropies in Theorem \ref{thm:multimarginal} in terms of Shannon entropies, and applying the general duality principle   with $c_i = \lambda_i(1-\lambda_i)$ and $E_i = \mathbb{R}^d$ for each $i=1,\dots,n$.
\begin{corollary}\label{cor:FnlBS}
Let $\lambda_i >  0$ satisfy $\sum_{i=1}^n \lambda_i = 1$, and let $f_i : \mathbb{R}^d \to \mathbb{R}$, $i=1, \dots, n$ be Borel functions with well-defined barycenters satisfying
\begin{align}
\sum_{i=1}^n \lambda_i (1-\lambda_i) f_i(x_i) \geq \sum_{1\leq i < j \leq n} \lambda_i \lambda_j \langle x_i, x_j \rangle, ~~~\forall x_1, \dots, x_n \in \mathbb{R}^d .\label{ptwiseInequality}
\end{align}
Then 
$$
e^{\mathcal{Q}(f_1, \dots, f_n)}\prod_{i=1}^n \left( \int e^{-f_i(x)}dx \right)^{\lambda_i (1-\lambda_i)} \leq (2\pi)^{\frac{d}{2} \sum_{i=1}^n \lambda_i (1-\lambda_i)}.
$$
\end{corollary}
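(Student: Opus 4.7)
The plan is to invoke Theorem~\ref{thm:dualityAsymm} with a suitable specialization of its parameters and then identify the resulting sharp Gaussian constant explicitly. I take $E_i = \mathbb{R}^d$, $c_i = \lambda_i(1-\lambda_i)$, and the symmetric bilinear form $Q$ on $E = (\mathbb{R}^d)^n$ specified by
$$\langle x, Qx\rangle = \sum_{1 \leq i < j \leq n} \lambda_i \lambda_j \langle x_i, x_j\rangle.$$
With these choices, the hypothesis \eqref{ptwiseInequality} is literally \eqref{eq:pointwiseInequalityIntro}, so Theorem~\ref{thm:dualityAsymm} delivers
$$\prod_{i=1}^n \left(\int_{\mathbb{R}^d} e^{-f_i}\,dx\right)^{\lambda_i(1-\lambda_i)} \leq \exp\!\bigl(D_g(\mathbf{c},Q) - \mathcal{Q}(f_1,\dots,f_n)\bigr).$$
Rearranging then reduces the corollary to establishing the identity $D_g(\mathbf{c},Q) = \tfrac{d}{2}\log(2\pi)\sum_{i=1}^n \lambda_i(1-\lambda_i)$.

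To pin down $D_g(\mathbf{c},Q)$, I use the reduction previewed in the paragraph following Theorem~\ref{thm:multimarginal}: writing $D(\mu_i\|\gamma) = -h(\mu_i) + \tfrac{d}{2}\log(2\pi) + \tfrac{1}{2}\EE|X_i|^2$ and expanding
$$\frac{1}{2}\sum_{i<j}\lambda_i\lambda_j\EE|X_i-X_j|^2 = \frac{1}{2}\sum_i \lambda_i(1-\lambda_i)\EE|X_i|^2 - \sum_{i<j}\lambda_i\lambda_j\EE\langle X_i,X_j\rangle,$$
the common second-moment contributions cancel on both sides of the inequality in Theorem~\ref{thm:multimarginal}. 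What remains is precisely \eqref{eq:mmEntropy} for the $(\mathbf{c},Q)$ chosen above with constant $D = \tfrac{d}{2}\log(2\pi)\sum_i \lambda_i(1-\lambda_i)$, yielding the upper bound $D(\mathbf{c},Q) \leq D$. The equality cases recorded in Theorem~\ref{thm:multimarginal} exhibit centered Gaussians that saturate this inequality, so $D_g(\mathbf{c},Q)$ equals this value exactly; Gaussian saturation (Theorem~\ref{thm:GaussianSaturation}) then also gives $D(\mathbf{c},Q) = D_g(\mathbf{c},Q)$.

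Substituting this explicit value of $D_g(\mathbf{c},Q)$ into the inequality above and rearranging yields the stated bound. The argument is essentially a specialization of the already-established duality and Gaussian saturation, and the only step requiring care is the moment-bookkeeping that separates the transport cost from the relative entropies to isolate the Shannon-entropy form of the inequality; I expect no real obstacle beyond this routine computation.
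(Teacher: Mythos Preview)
Your proposal is correct and follows essentially the same route as the paper: specialize Theorem~\ref{thm:dualityAsymm} with $E_i=\mathbb{R}^d$, $c_i=\lambda_i(1-\lambda_i)$, and $\langle x,Qx\rangle=\sum_{i<j}\lambda_i\lambda_j\langle x_i,x_j\rangle$, then identify $D_g(\mathbf{c},Q)$ via the Shannon-entropy reformulation of Theorem~\ref{thm:multimarginal} (the moment cancellation you spell out is exactly what the paper means by ``expressing the relative entropies in terms of Shannon entropies''). Your explicit verification that the Gaussian equality cases of Theorem~\ref{thm:multimarginal} pin down $D_g(\mathbf{c},Q)$ is a nice touch the paper leaves implicit.
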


We remark that the functional  Blaschke--Santal\'{o}  inequality  corresponds to the case $n=2$; it is  generally stated with the centering assumption $\int x e^{-f_1(x)}  dx=0$, in which case the barycenter correction term $\mathcal{Q}(f_1, f_2)$ vanishes. This was first proved in \cite{AKM}, and earlier in \cite{Ball} for the restricted setting of even functions.  Nakamura and Tsuji's main result in \cite{NT} implies the above, but only in the special case where each $f_i$ is an even function, in which case the barycenter correction term $\mathcal{Q}(f_1, \dots, f_n)$ vanishes; also, they only obtain the explicit  sharp constant in the case of equal weights.   Kolesnikov and Werner previously established  the more restrictive case where each $f_i$ is unconditional and weights are all equal \cite{KW}.   Although both \cite{NT} and \cite{KW} explore certain dual forms of their results, neither contemplate the generic duality principle that relates entropy inequalities of the form \eqref{eq:mmEntropy} to functional inequalities of the form \eqref{IntegralIneqIntro}, which generalizes that previously known for the functional Blaschke--Santal\'o inequality (see, e.g., \cite{MF}).

With the functional form of the barycenter problem now written as Corollary \ref{cor:FnlBS}, we conclude with an  application to  geometry following \cite[Section 5]{KW}, in the same way that the functional Blaschke--Santal\'o inequality may be used to deduce the classical Blaschke--Santal\'o inequality for convex bodies.   Indeed, recall that for a convex (or, more generally, star-shaped) body $C\subset \mathbb{R}^d$ containing the origin, the associated Minkowski functional is defined as
$$
p_C(x) := \inf\{ t> 0 : x \in t C\}, ~~~x\in \mathbb{R}^d.
$$
By the layer cake representation, we have the  classical identity
$$
(2 \pi)^{d/2}\frac{\operatorname{vol}_d(C) }{\operatorname{vol}_d(B_2^d)} = \int_{\mathbb{R}^d} e^{-\frac{1}{2}p_C(x)^2}dx,
$$
where $\operatorname{vol}_d$ is $d$-dimensional volume, and $B_2^d$ is the unit ball in $\mathbb{R}^d$.  Additionally, if $C$ is centered in the sense that  $\int_C x dx=0$, then another application of the layer cake representation reveals that $\bary( \frac{1}{2}p_C^2) = 0$.  A direct application of Corollary \ref{cor:FnlBS} then gives the following, which removes the  assumptions of equal weights and unconditional convex bodies made in \cite{KW}, and of equal weights and symmetric convex bodies made in \cite{NT}.

\begin{corollary}
Let $\lambda_i >  0$ satisfy $\sum_{i=1}^n \lambda_i = 1$.  If $C_1, \dots, C_n \subset \mathbb{R}^d$ are convex bodies, each containing the origin, satisfying 
$$
\frac{1}{2}\sum_{i=1}^n \lambda_i (1-\lambda_i) p_{C_i}(x_i)^2 \geq   \sum_{1\leq i < j \leq n} \lambda_i \lambda_j \langle x_i, x_j \rangle, ~~~\forall x_1, \dots, x_n \in \mathbb{R}^d
$$
and $C_1, \dots, C_{n-1}$ are centered, then 
$$
\prod_{i=1}^n \operatorname{vol}_d(C_i)^{q_i}  \leq  \operatorname{vol}_d(B_2^d) ,
$$
where $q_i := \frac{\lambda_i (1-\lambda_i) }{ \sum_{j=1}^n \lambda_j(1-\lambda_j) } $.
\end{corollary}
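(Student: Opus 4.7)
The plan is to apply Corollary \ref{cor:FnlBS} directly with the choice $f_i(x) := \frac{1}{2} p_{C_i}(x)^2$ on $\mathbb{R}^d$. First I would verify that this choice satisfies the hypotheses of the corollary: the pointwise inequality \eqref{ptwiseInequality} is exactly the standing assumption; each $f_i$ is continuous (hence Borel) because Minkowski functionals of bodies containing the origin are continuous; and $\bary(f_i)$ is well defined because the layer cake identity displayed in the excerpt shows $\int e^{-f_i}dx = (2\pi)^{d/2} \operatorname{vol}_d(C_i)/\operatorname{vol}_d(B_2^d)$ is finite.

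Next I would compute the barycenter correction $\mathcal{Q}(f_1,\dots,f_n)$. By the layer cake representation noted in the excerpt, $\bary(\tfrac{1}{2} p_{C_i}^2) = 0$ whenever $C_i$ is centered, so $\bary(f_i)=0$ for $i=1,\dots,n-1$. The quadratic form encoded by $Q$ in this setting is $\langle x, Qx\rangle = \sum_{1\leq i<j\leq n} \lambda_i \lambda_j \langle x_i, x_j\rangle$, which consists entirely of cross terms; since every pair $i<j$ satisfies $i\leq n-1$, and hence $\bary(f_i)=0$, the evaluation of $Q$ at $(\bary(f_1),\dots,\bary(f_n))$ vanishes term by term. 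Thus $\mathcal{Q}(f_1,\dots,f_n) = 0$, and only the centering of the first $n-1$ bodies is required.

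Finally, substituting these computations into Corollary \ref{cor:FnlBS} yields
\[
\prod_{i=1}^n \left( \frac{(2\pi)^{d/2} \operatorname{vol}_d(C_i)}{\operatorname{vol}_d(B_2^d)} \right)^{\lambda_i(1-\lambda_i)} \leq (2\pi)^{\frac{d}{2}\sum_{i=1}^n \lambda_i(1-\lambda_i)},
\]
the $(2\pi)^{d/2}$ factors cancel, and raising both sides to the power $1/\sum_j \lambda_j(1-\lambda_j)$ delivers the stated bound $\prod_i \operatorname{vol}_d(C_i)^{q_i} \leq \operatorname{vol}_d(B_2^d)$.

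I do not expect a genuine obstacle here, since the argument is a direct translation from the functional inequality to its geometric shadow in the spirit of \cite[Section 5]{KW}. The only mildly subtle point is the observation that partial centering ($C_1,\dots,C_{n-1}$) suffices to annihilate $\mathcal{Q}$; this is not an accident but a structural feature of the quadratic form $Q$, which has no diagonal blocks and thus pairs every index with a distinct one.
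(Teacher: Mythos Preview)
Your proposal is correct and follows exactly the paper's approach: the paper states the corollary as ``a direct application of Corollary \ref{cor:FnlBS}'' with $f_i = \tfrac{1}{2}p_{C_i}^2$, using the layer cake identity for the integrals and the vanishing of $\bary(\tfrac{1}{2}p_{C_i}^2)$ for centered $C_i$. Your explicit explanation of why only $n-1$ bodies need centering (because $Q$ has no diagonal blocks, so every cross term involves an index $i\leq n-1$) is a helpful detail that the paper leaves implicit.
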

\begin{remark}The same holds if the bodies are star-shaped instead of convex.  Up to translation, if $n>2$, then  equality holds if only if $C_i = B_2^d$ for all $i$.  For $n=2$, equality holds precisely when $C_1$ is a centered ellipsoid, and $C_2$ is the corresponding polar body; this is the classical statement of the geometric Blaschke--Santal\'o inequality \cite{Santalo}.  
\end{remark}

 \section{Proof of Theorem \ref{thm:GaussianSaturation}}

The proof of Theorem \ref{thm:GaussianSaturation} proceeds in two steps.  First, we show a reduction to symmetric measures.  Then, we establish Gaussian saturation. In principle, the Gaussian saturation step could be accomplished by invoking Nakamura and Tsuji's \cite[Theorem 1.1]{NT} for even functions and establishing the generic duality principle.  However, as we shall see, Gaussian saturation  follows in a fairly straightforward way after taking the steps needed to establish both the duality principle and the reduction to symmetric measures.  So, we opt for this more self-contained approach.  
To this end, we introduce the following formal definition.
\begin{definition}
Let $D_s(\mathbf{c},Q)$ denote the smallest constant $D$ such that \eqref{eq:mmEntropy} holds for all symmetric $\mu_i \in \mathcal{P}(E_i)$, $i=1,\dots, n$.
\end{definition}
It is helpful to note that we always have the following ordering of $D, D_s$, and $D_g$:
$$
D(\mathbf{c},Q) \geq D_s(\mathbf{c},Q) \geq D_g(\mathbf{c},Q) > -\infty.
$$

\subsection{The equivalence $D_s(\mathbf{c},Q) = D(\mathbf{c},Q)$.}

This section contains the reduction to the symmetric measures.  We begin with several simple technical lemmas. 

\begin{lemma}\label{lem:PropertiesD} For $\delta>0$, it holds that  
$$D(\mathbf{c},Q+ \delta \id) \leq D(\mathbf{c},Q ) = \lim_{\delta \downarrow 0} D(\mathbf{c},Q+ \delta \id).$$
In particular, $\delta \mapsto D(\mathbf{c},Q+ \delta \id)$ is non-increasing and right-continuous.  The same holds when $D$ is replaced by $D_g$ or $D_s$.
\end{lemma}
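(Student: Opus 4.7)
The key observation is that $\langle x, (Q+\delta\id) x\rangle = \langle x, Qx\rangle + \delta|x|^2$, and crucially $|x|^2 = \sum_{i=1}^n |x_i|^2$ depends only on the marginal distributions of $\mu$, not on the coupling. So for any coupling $\mu \in \pi(\mu_1,\dots,\mu_n)$,
\[
\int_E \langle x,(Q+\delta\id)x\rangle\, d\mu = \int_E \langle x,Qx\rangle\, d\mu + \delta \sum_{i=1}^n \int_{E_i} |x_i|^2\, d\mu_i,
\]
and the second summand comes out of the supremum over couplings as a constant. Therefore \eqref{eq:mmEntropy} with $Q$ replaced by $Q+\delta\id$ is equivalent to
\[
\sum_{i=1}^n c_i h(\mu_i) \leq \sup_{\mu \in \pi(\mu_1,\dots,\mu_n)} \int_E \langle x,Qx\rangle\, d\mu + \delta \sum_{i=1}^n \int_{E_i}|x_i|^2\, d\mu_i + D.
\]

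The monotonicity and the inequality $D(\mathbf{c}, Q + \delta \id) \leq D(\mathbf{c}, Q)$ are then immediate: since the added term $\delta \sum_i \int |x_i|^2 d\mu_i$ is nonnegative, any $D$ that works for $Q$ automatically works for $Q+\delta\id$. Comparing any two $0 < \delta_1 < \delta_2$ by writing $Q + \delta_2 \id = (Q + \delta_1 \id) + (\delta_2 - \delta_1)\id$ yields $D(\mathbf{c}, Q + \delta_2 \id) \leq D(\mathbf{c}, Q + \delta_1 \id)$, so the map is non-increasing in $\delta$.

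For the limit, the non-increasing property already gives $\lim_{\delta\downarrow 0} D(\mathbf{c}, Q + \delta\id) \leq D(\mathbf{c}, Q)$. For the reverse inequality, I would fix any centered $\mu_i \in \mathcal{P}(E_i)$ and apply the defining inequality for $Q+\delta\id$:
\[
\sum_{i=1}^n c_i h(\mu_i) \leq \sup_{\mu} \int_E \langle x,Qx\rangle\, d\mu + \delta \sum_{i=1}^n \int |x_i|^2\, d\mu_i + D(\mathbf{c}, Q+\delta\id).
\]
Because $\mathcal{P}(E_i)$ was defined to consist of measures with finite second moments, the $\delta$-term vanishes as $\delta \downarrow 0$ for this fixed choice of marginals, yielding
\[
\sum_{i=1}^n c_i h(\mu_i) \leq \sup_{\mu} \int_E \langle x,Qx\rangle\, d\mu + \lim_{\delta\downarrow 0} D(\mathbf{c}, Q+\delta\id).
\]
Taking the supremum over centered $\mu_i$ on the left gives $D(\mathbf{c}, Q) \leq \lim_{\delta\downarrow 0} D(\mathbf{c}, Q+\delta\id)$. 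Right-continuity at any $\delta_0 > 0$ follows by the same argument applied with $Q$ replaced by $Q + \delta_0 \id$.

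Finally, the entire argument is insensitive to whether one restricts the class of admissible $\mu_i$ to centered Gaussians (for $D_g$) or symmetric measures (for $D_s$), since it only relies on the finiteness of second moments, which is built into $\mathcal{P}(E_i)$ and preserved under both restrictions. I do not anticipate a real obstacle here; the only mild care needed is to take the $\delta \downarrow 0$ limit with $\mu_i$ fixed (rather than uniformly over all marginals), which is exactly what the definition of the infimal $D$ permits.
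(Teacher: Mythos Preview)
Your proof is correct and essentially identical to the paper's. Both arguments hinge on the decomposition $\langle x,(Q+\delta\id)x\rangle = \langle x,Qx\rangle + \delta\sum_i\int|x_i|^2\,d\mu_i$, use nonnegativity of the second-moment term for monotonicity, and establish the reverse limit inequality by fixing the marginals and sending $\delta\downarrow 0$ using finiteness of second moments; your observation that $|x|^2$ depends only on the marginals (so the $\delta$-term exits the supremum over couplings) is exactly what underlies the paper's second displayed identity.
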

 \begin{proof}
For any centered $(\mu_i)_{i=1}^n$ with finite second moments and $\delta>0$
\begin{align*}
\sum_{i=1}^n c_i h(\mu_i)  &\leq \sup_{\mu \in \pi(\mu_1, \dots, \mu_n)}  \int_E \langle x, Q   x\rangle d\mu(x) + D(\mathbf{c},Q)\\
&\leq \sup_{\mu \in \pi(\mu_1, \dots, \mu_n)}  \int_E \langle x, (Q+\delta \id) x\rangle d\mu(x)   + D(\mathbf{c},Q).
\end{align*}
So, we conclude $D(\mathbf{c},Q+\delta \id)\leq D(\mathbf{c},Q)$.  Similarly, definitions give
\begin{align*}
\sum_{i=1}^n c_i h(\mu_i)  &\leq \sup_{\mu \in \pi(\mu_1, \dots, \mu_n)}  \int_E \langle x, (Q+\delta \id)  x\rangle d\mu(x) + D(\mathbf{c},Q+\delta \id)\\
&= \sup_{\mu \in \pi(\mu_1, \dots, \mu_n)}  \int_E \langle x, Q x\rangle d\mu(x) + \delta\sum_{i=1}^n \int |x_i|^2 d\mu_i + D(\mathbf{c},Q+\delta \id).
\end{align*}
Taking $\delta\downarrow 0$, we conclude
\begin{align*}
\sum_{i=1}^n c_i h(\mu_i)  \leq  \sup_{\mu \in \pi(\mu_1, \dots, \mu_n)}  \int_E \langle x, Q x\rangle d\mu(x) + \liminf_{\delta \downarrow 0}D(\mathbf{c},Q+\delta \id). 
\end{align*}
It follows, therefore,  by  definition of $D(\mathbf{c},Q )$ that $D(\mathbf{c},Q ) \leq \liminf_{\delta \downarrow 0} D(\mathbf{c},Q+ \delta \id)$. Combination with the reverse inequality gives the claim.  The same argument works for $D_g$ and $D_s$ by restricting attention to Gaussian and symmetric measures, respectively.
\end{proof}

\begin{proposition}\label{prop:Qblocks}The following hold:
\begin{enumerate}[(i)]
\item If $\pi_i Q \pi^*_i \not\geq 0$ for some $i\in \{1,\dots, n\}$, then $D_g(\mathbf{c},Q) = +\infty$.  
\item If $\pi_i Q \pi^*_i \geq 0$ for every $i\in \{1,\dots, n\}$, then $D(\mathbf{c},Q+\delta \id_E) <\infty$ for all $\delta>0$. 
\end{enumerate}
\end{proposition}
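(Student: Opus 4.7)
The plan for (i) is to exhibit, for $t \to \infty$, a one-parameter family of centered Gaussian marginals along which $\sum_i c_i h(\mu_i)$ grows only logarithmically in $t$ while $\sup_{\mu} \int \langle x, Qx\rangle\, d\mu$ decays like $-t^2$, forcing $D_g(\mathbf{c},Q) = +\infty$. Concretely, pick $v \in E_i$ with $\langle v, \pi_i Q \pi_i^* v\rangle < 0$ and set $\mu_i = N\bigl(0, t^2 vv^*/|v|^2 + \id_{E_i}\bigr)$ and $\mu_j = N(0, \id_{E_j})$ for $j\neq i$, so that $c_i h(\mu_i) = \tfrac{c_i}{2}\log(1+t^2) + O(1)$. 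Decomposing $Q$ into blocks $Q_{jk} := \pi_j Q\pi_k^*$, the block-$i$ diagonal contribution to $\int\langle x,Qx\rangle d\mu$ is $t^2 \langle v, Q_{ii} v\rangle/|v|^2 + O(1)$; other diagonal terms are $O(1)$ (determined by the fixed marginals); and Cauchy--Schwarz bounds every off-diagonal cross-term uniformly in the coupling by $\|Q_{jk}\|_{\mathrm{op}}\sqrt{\Tr(\Cov\mu_j)\Tr(\Cov\mu_k)}$, which is $O(t)$ when exactly one of $j,k$ equals $i$ and $O(1)$ otherwise. Hence $\sup_\mu\int\langle x,Qx\rangle d\mu \leq t^2\langle v,Q_{ii}v\rangle/|v|^2 + O(t)$, so the difference LHS $-$ supremum tends to $+\infty$.

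For (ii), the plan is to control the (logarithmic) entropy contribution on the LHS by a linear-in-variance lower bound on the RHS obtained from the product coupling and the positive definiteness of each perturbed diagonal block. Fix centered $\mu_i\in\mathcal{P}(E_i)$ with $\Sigma_i := \Cov(\mu_i)$. Since each $Q_{ii} \geq 0$, evaluating the coupling supremum at the product measure $\bigotimes_i \mu_i$ (whose cross-terms vanish by centering) yields
$$
\sup_{\mu\in\pi(\mu_1,\dots,\mu_n)} \int_E \langle x, (Q+\delta\id_E)x\rangle\, d\mu \;\geq\; \sum_{i=1}^n \Tr\bigl((Q_{ii}+\delta\id_{E_i})\Sigma_i\bigr) \;\geq\; \delta\sum_{i=1}^n \Tr(\Sigma_i).
$$
On the other hand, Gaussian maximum entropy combined with the AM--GM inequality applied to the eigenvalues of $\Sigma_i$ gives $h(\mu_i)\leq \tfrac{d_i}{2}\log\bigl(2\pi e\,\Tr(\Sigma_i)/d_i\bigr)$. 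Since the logarithm is dominated by any positive multiple of the identity, there exists $C_i = C_i(c_i,d_i,\delta)$ with $c_i h(\mu_i) \leq \delta\,\Tr(\Sigma_i) + C_i$ uniformly over all admissible $\mu_i$. Summing and combining with the displayed lower bound gives $D(\mathbf{c}, Q+\delta\id_E) \leq \sum_i C_i < \infty$.

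The one place where mild care is needed is the book-keeping in (i): one must verify that the off-diagonal cross-terms, whose sign is at the mercy of the coupling, cannot cancel the negative quadratic blow-up coming from the $i$-th diagonal block. This is immediate from the observation that only $\mu_i$ has $\Tr(\Cov\mu_i) = O(t^2)$, whereas all other marginal traces are $O(1)$, so Cauchy--Schwarz pegs every cross-term at $O(t)$, which is dwarfed by the $\Omega(t^2)$ diagonal term. Everything else in both parts amounts to routine estimates.
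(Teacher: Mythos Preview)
Your proof is correct and follows essentially the same approach as the paper: for (i), both arguments scale a single direction in one Gaussian marginal and use Cauchy--Schwarz to show the cross-terms are $O(\sqrt{R})$ (your $O(t)$) while the diagonal term is negative of order $R$ (your $t^2$); for (ii), both combine the Gaussian maximum-entropy bound with AM--GM and then compare $\log$ against a linear function of $\Tr(\Sigma_i)$. The only cosmetic difference is that you make explicit the use of the product coupling to obtain the lower bound $\sup_\mu \int \langle x,(Q+\delta\id_E)x\rangle\,d\mu \geq \delta\sum_i \Tr(\Sigma_i)$, which the paper leaves implicit.
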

\begin{proof}
Suppose    that $v \in E_i$ is an eigenvector of $\pi_i Q \pi_i^*$ with negative eigenvalue $\lambda<0$.  Fix a parameter $R>0$. Let $\mu_i$ be Gaussian with variance $R$ in the direction of $v$, and unit variance on the  complementary subspace.  Also, let $\mu_j = N(0,\id_{E_j})$ for each $j\in \{1,\dots, n\}\setminus\{i\}$. 
In this way, and with the help of Cauchy--Schwarz, \eqref{eq:mmEntropy} with sharp constant $D = D_g(\mathbf{c},Q)$ reads as
$$
\frac{c_i}{2}\log R \leq \lambda R + O(1 + \sqrt{R}) + D_g(\mathbf{c},Q).
$$
Since $\lambda <0$, we take $R\to \infty$ and find $D_g(\mathbf{c},Q) = +\infty$.  

Now, for the second claim, let  centered $\mu_i\in \mathcal{P}(E_i)$ have covariance   $K_i$.  Using the fact that Gaussians maximize entropy subject to covariance constraints and the assumption that $\pi_i Q \pi^*_i \geq 0$ for every $i\in \{1,\dots, n\}$, we have 
\begin{align*}
\sum_{i=1}^n c_i h(\mu_i) - \sup_{\mu \in \pi(\mu_1, \dots, \mu_n) } \int_E \langle x, (Q+ \delta\id_E) x\rangle d\mu 
&\leq \sum_{i=1}^n \frac{c_i\dim(E_i)}{2} \log\left(2 \pi e \det(K_i)^{1/\dim(E_i)}\right)  - \delta \sum_{i=1}^n \Tr(K_i)\\
&\leq \sum_{i=1}^n \frac{c_i\dim(E_i)}{2} \log\left(\frac{2 \pi e}{\dim(E_i)} \Tr(K_i) \right)  - \delta \sum_{i=1}^n \Tr(K_i).
\end{align*}
The final line is uniformly bounded from above for all $(K_i)_{i=1}^n$, so the claim follows.

\end{proof}

The following proposition contains the core argument of this subsection.    It roughly follows the same scheme as the proof of Gaussian saturation for forward-reverse Brascamp--Lieb inequalities in  \cite{LCCV}.

 \begin{proposition}\label{prop:ExtremizedImpliesGaussian} The following hold:
 \begin{enumerate}[(i)]
 \item If $D(\mathbf{c},Q) <\infty$, then $D(\mathbf{c},Q) =D_s(\mathbf{c},Q)$.    
 \item Moreover, if  there are symmetric $\mu_i \in \mathcal{P}(E_i)$, $i=1, \dots, n$ that achieve 
 \begin{align}
 \sum_{i=1}^n c_i h(\mu_i)   =  \sup_{\mu \in \pi(\mu_1, \dots, \mu_n) } \int_E \langle x, Q x\rangle d\mu  + D_s(\mathbf{c},Q), \label{eq:extremized}
 \end{align}
 then $D_s(\mathbf{c},Q) = D_g(\mathbf{c},Q)$. 
 \end{enumerate}
 \end{proposition}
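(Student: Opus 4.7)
My plan is to prove the two parts in turn, exploiting the trivial ordering $D(\mathbf{c},Q) \geq D_s(\mathbf{c},Q) \geq D_g(\mathbf{c},Q)$ so that only the reverse inequalities need to be established. For part (i), I would employ a doubling trick, in the spirit of the scheme used for forward--reverse Brascamp--Lieb inequalities. Given centered $(\mu_i)$, form the centrally symmetric measures on the doubled spaces $E_i \oplus E_i$ by
\[
\tilde{\mu}_i := \tfrac{1}{2}(\mu_i \otimes \check{\mu}_i + \check{\mu}_i \otimes \mu_i),
\]
where $\check{\mu}_i$ denotes the reflection of $\mu_i$ through the origin; symmetry of $\tilde{\mu}_i$ under $z \mapsto -z$ is immediate. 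Apply the symmetric inequality on $E \oplus E$ equipped with the block-diagonal quadratic form $\tilde Q = Q \oplus Q$, and verify: (a) $\sum c_i h(\tilde{\mu}_i) \geq 2 \sum c_i h(\mu_i)$, by concavity of entropy together with $h(\check{\mu}_i) = h(\mu_i)$; (b) $D_s(\mathbf{c}, \tilde Q) \leq 2\, D_s(\mathbf{c}, Q)$, by projecting any symmetric trial measure onto its two $E$-marginals and using the additive splitting of $\tilde Q$ together with subadditivity of entropy; (c) $\sup_{\tilde{\pi}} \int \langle z, \tilde Q z\rangle \, d\tilde{\pi} \leq 2 \sup_{\pi} \int \langle x, Qx\rangle \, d\pi$, where $\tilde{\pi}$ ranges over couplings of $(\tilde{\mu}_i)$ and $\pi$ over couplings of $(\mu_i)$. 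Chaining these yields $2\sum c_i h(\mu_i) \leq 2 \sup_\pi \int \langle x, Qx\rangle d\pi + 2 D_s(\mathbf{c},Q)$, giving $D \leq D_s$ after dividing by two.

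For part (ii), given a symmetric extremizer $(\mu_i)$ of \eqref{eq:extremized}, I would carry out an Euler--Lagrange analysis. Consider even, mean-zero perturbations $\eta_i$ of $\rho_i$ with sufficient decay, and differentiate both sides of \eqref{eq:extremized} at $\varepsilon = 0$ along $\rho_i + \varepsilon\eta_i$. The LHS contributes $-\sum c_i \int \log \rho_i \cdot \eta_i$. The RHS, via the envelope theorem applied to the supremum over couplings (with attainment guaranteed after $\delta$-regularization via Lemma~\ref{lem:PropertiesD} and Proposition~\ref{prop:Qblocks}), contributes $\sum_i \int \varphi_i \cdot \eta_i$, where $\varphi_i$ is a Kantorovich-type potential for the optimal multimarginal coupling under the quadratic cost determined by $Q$. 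By Gangbo--\'{S}wi\c{e}ch regularity for multimarginal quadratic optimal transport, each $\varphi_i$ is a quadratic polynomial in $x_i$. Varying $\eta_i$ over the even mean-zero class forces the pointwise Euler--Lagrange relation $-c_i \log \rho_i(x_i) = \varphi_i(x_i) + \text{const.}$, so $\rho_i$ is a centered Gaussian density. Consequently, the extremizer is Gaussian, which by definition of $D_g$ yields $D_s(\mathbf{c},Q) = D_g(\mathbf{c},Q)$. The equality case analysis needed in Theorem~\ref{thm:BaryTal} can then be recovered by tracking where the Euler--Lagrange step admits nonunique solutions (namely the $n=2$ case).

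The main obstacle is step (c) of part (i): the $\sup$-of-couplings functional is not convex in the marginals, so $\tilde{\pi}$ does not automatically decompose as a mixture of products of couplings of $(\mu_i)$ and $(\check{\mu}_i)$. Projecting $\tilde{\pi}$ onto each $E$-copy produces a coupling of the symmetrizations $\mu_i^s := \tfrac{1}{2}(\mu_i + \check{\mu}_i)$, so the missing ingredient is the auxiliary bound $\sup_{\pi^s} \int \langle x, Qx\rangle d\pi^s \leq \sup_\pi \int \langle x, Qx\rangle d\pi$. This should follow by a randomized sign-flip construction: given a coupling $\pi^s$, reconstruct $Y_i = \epsilon_i X_i^s$ by choosing the signs $\epsilon_i$ (conditionally on $X^s$) to both respect the marginal constraint $Y_i \sim \mu_i$ and maximize $\sum \epsilon_i \epsilon_j \langle X_i^s, Q_{ij} X_j^s\rangle$, exploiting the invariance $\langle x, Qx\rangle = \langle -x, Q(-x)\rangle$. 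For part (ii), the subtle point is rigorously justifying the envelope-theorem step and the quadratic structure of the dual potentials in full generality, but these are handled cleanly by the $\delta$-regularization already in place.
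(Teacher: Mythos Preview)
Both parts of your proposal contain genuine gaps.

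\textbf{Part (i).} Your step (c) fails. As you observe, projecting a coupling $\tilde\pi$ of the $(\tilde\mu_i)$ onto either $E$-copy yields a coupling of the symmetrizations $\mu_i^s=\tfrac12(\mu_i+\check\mu_i)$, so the needed bound reduces to
\[
\sup_{\pi^s\in\pi(\mu_1^s,\dots,\mu_n^s)}\int\langle x,Qx\rangle\,d\pi^s \;\leq\; \sup_{\pi\in\pi(\mu_1,\dots,\mu_n)}\int\langle x,Qx\rangle\,d\pi.
\]
This inequality is false in general. Take $n=2$, $E_1=E_2=\mathbb{R}$, $\langle x,Qx\rangle=x_1x_2$, and let $\mu_1=\tfrac{2}{3}\delta_{-1}+\tfrac{1}{3}\delta_{2}$, $\mu_2=\check\mu_1=\tfrac{1}{3}\delta_{-2}+\tfrac{2}{3}\delta_{1}$ (both centered; smooth slightly to land in $\mathcal P(\mathbb R)$). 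Then $\mu_1^s=\mu_2^s$, and the comonotone coupling gives $\sup_{\pi^s}\int x_1x_2\,d\pi^s=\operatorname{Var}(\mu_1^s)=2$, whereas the comonotone coupling of $\mu_1,\mu_2$ gives $\sup_{\pi}\int x_1x_2\,d\pi=1$. Your proposed sign-flip repair cannot work: the conditional law of $\epsilon_i$ given $X_i^s$ is pinned down by the marginal constraint $\epsilon_i X_i^s\sim\mu_i$ (namely $P(\epsilon_i=1\mid X_i^s=x)=f_i(x)/(f_i(x)+f_i(-x))$), leaving no freedom to optimize the cross terms $\epsilon_i\epsilon_j\langle X_i^s,Q_{ij}X_j^s\rangle$. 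The paper instead doubles by taking $X_i,X_i'$ i.i.d.\ and forming $X_i^\pm=(X_i\pm X_i')/\sqrt2$; the point is that the \emph{conditional} law of $X_i^-$ given $X_i^+$ is symmetric, so applying $D$ to $(X_i^+)$ and $D_s$ to the conditional $(X_i^-\mid X_i^+)$, together with rotation invariance of the quadratic form, yields $D\leq\tfrac12(D+D_s)$ with no coupling comparison of the type you need.

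\textbf{Part (ii).} The assertion that Gangbo--\'Swi\c{e}ch regularity forces the Kantorovich potentials $\varphi_i$ to be quadratic polynomials is incorrect. For multimarginal transport with quadratic cost, the potentials are convex (and the optimal map is a gradient), but they are quadratic precisely when the marginals are Gaussian---which is the conclusion you are trying to reach, so the argument is circular. The paper avoids any Euler--Lagrange analysis: it shows that equality in the chain from part (i) (with $D_s$ throughout, since now $X_i^+$ is also symmetric) forces $(X_i^+)$ to be extremizers as well; iterating and invoking the entropic central limit theorem (entropy side) and the $W_2$ central limit theorem (transport side) then shows the Gaussians $N(0,\operatorname{Cov}(\mu_i))$ are extremizers, hence $D_s\leq D_g$.
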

 \begin{proof}
 Let $\mu_i \in \mathcal{P}(E_i)$ be centered, and let  $X_i, X'_i$ be i.i.d.~according to $\mu_i$. 
Define
$$
X^+_i = \frac{X_i + X'_i}{\sqrt{2}}; ~~X^-_i = \frac{X_i - X'_i}{\sqrt{2}}. \label{eq:Xminus}
$$
The simple but crucial ingredient is to observe that, for each $i=1,\dots, n$, 
$$
(X_i^+,X_i^-) = \left( \frac{X_i + X'_i}{\sqrt{2}},  \frac{X_i - X'_i}{\sqrt{2}} \right)  \overset{law}{=} \left( \frac{X_i + X'_i}{\sqrt{2}},  \frac{X'_i - X_i}{\sqrt{2}} \right) = (X_i^+,- X_i^-).
$$
In particular, the conditional law of $X_i^-$ given $\{X^+_i = x^+_i\}$ is symmetric  for each $x_i^+$ ($\operatorname{law}(X_i^+)$-a.s.).  Also, the law of $X^+_i$ is trivially centered.

We'll indulge in the usual abuse of notation, and denote random vectors and their distributions interchangeably (e.g., $h(X_i) = h(\mu_i)$).  Following our previous practice, we write $X\in \pi(X_1, \dots, X_n)$ to denote that $X$ is an $E$-valued random vector whose law is a coupling of $(\mu_i)_{i=1}^n$; i.e,. $\pi_i(X) \overset{law}{=}  X_i$.  Extending this, we  write $(X,X')\in \pi((X_1,X'_1), \dots, (X_n,X_n'))$ to denote that $(X,X')$ is a pair of $E$-valued random vectors, with $(\pi_i(X),\pi_i(X')) \overset{law}{=} (X_i,X_i')$.  Similar notation will be used for $X^+$ and $X^-$.  

Starting with the chain rule for entropy, we have 
\begin{align}
 \sum_{i=1}^n c_i h(X_i) &= \frac{1}{2}\sum_{i=1}^n c_i h(X^+_i) + \frac{1}{2}\sum_{i=1}^n c_i   h(X^-_i|X^+_i)  \notag \\
 &\leq \max_{(X^+,X^-)\in \pi((X^+_1,X^-_1), \dots, (X^+_n,X^-_n)) } \EE\left[  \frac{1}{2}\langle X^+, Q X^+\rangle +  \frac{1}{2}\langle X^-, Q X^- \rangle \right]
 + \frac{D(\mathbf{c},Q) + D_s(\mathbf{c},Q) }{2}\label{eq:doubling}\\
 &= \max_{(X,X')\in \pi((X_1,X'_1), \dots, (X_n,X_n')) } \EE\left[  \frac{1}{2}\langle X, Q X\rangle +  \frac{1}{2}\langle X', Q X' \rangle \right]
 + \frac{D(\mathbf{c},Q) + D_s(\mathbf{c},Q) }{2} \label{eq:rotateBack}\\
   &\leq  \max_{X \in \pi( X_1, \dots, X_n ) }  \int_E \langle X, Q X\rangle d\mu  + \frac{D(\mathbf{c},Q) + D_s(\mathbf{c},Q) }{2}. \label{eqLinExp}
\end{align}

Inequality \eqref{eq:doubling} follows, in part, due to the definition of $D(\mathbf{c},Q)$ applied to the centered   $(X_i^+)_{i=1}^n$.  The other part is explained as follows: the conditional entropy $h(X_i^-|X_i^+)$ is equal to the entropy of the conditional law of $X_i^-$ given $\{X^+_i = x_i\}$, averaged with respect to the law of $X^+_i$.  So, letting $X^-_i| x^+_i$ denote  $X^-_i$ conditioned on $\{X^+_i = x^+_i\}$, we have for any coupling $X^+ \in \pi(X^+_1 , \dots,  X^+_n )$ that 
\begin{align*}
\sum_{i=1}^n c_i   h(X^-_i|X^+_i)  \leq \EE\left[    \max_{X^- \in \pi( X^-_1|\pi_1(X^+), \dots, X^-_n|\pi_n(X^+)) }  \EE\left[   \langle X^-, Q X^-\rangle   |X^+\right] \right] + D_s(\mathbf{c},Q),
\end{align*}
where we used the definition of $D_s(\mathbf{c},Q)$, which is justified here since  the conditional law of each $X_i^-$ given $\{X^+_i = x^+_i\}$ is symmetric, $\operatorname{law}(X_i^+)$-a.s.  Invoking   measurable selection theorems, as in \cite[Cor. 5.22]{Vil2}, the optimal (conditional) coupling above   can be taken to be a regular conditional probability (see \cite[p. 42]{LCCV2}).  When this conditional coupling is hooked up with the coupling $X^+$, the resulting pair $(X^+,X^-)$ is a coupling in $\pi((X^+_1,X^-_1), \dots, (X^+_n,X^-_n))$.  The subsequent identity \eqref{eq:rotateBack} follows by defining  
$$
X = \frac{1}{\sqrt{2}}(X^+ + X^-), ~~X' = \frac{1}{\sqrt{2}}(X^+ - X^-),
$$
for which  $(X,X')  \in \pi((X_1,X'_1), \dots, (X_n,X_n')) \Leftrightarrow (X^+,X^-)\in \pi((X^+_1,X^-_1), \dots, (X^+_n,X^-_n))$, and  
$$
\langle X^+, Q X^+\rangle +   \langle X^-, Q X^- \rangle = \langle X, Q X\rangle +  \langle X', Q X' \rangle
$$
by rotation invariance of the quadratic form.   The final step \eqref{eqLinExp} is just linearity of expectation and the fact that each $X$ and $X'$ are couplings in $\pi( X_1, \dots, X_n )$.

By definition of $D(\mathbf{c},Q)$, it follows that 
$$
D(\mathbf{c},Q) \leq \frac{D(\mathbf{c},Q) + D_s(\mathbf{c},Q) }{2},
$$
and we obtain the first assertion.

To prove the second part, take $\mu_i$ to be symmetric.  Then, $X_i^+$ is symmetric also, so the display containing \eqref{eq:doubling}-\eqref{eq:rotateBack} may be modified as
\begin{align}
 \sum_{i=1}^n c_i h(X_i) &= \frac{1}{2}\sum_{i=1}^n c_i h(X^+_i) + \frac{1}{2}\sum_{i=1}^n c_i   h(X^-_i|X^+_i)  \notag \\
 &\leq \max_{(X^+,X^-)\in \pi((X^+_1,X^-_1), \dots, (X^+_n,X^-_n)) } \EE\left[  \frac{1}{2}\langle X^+, Q X^+\rangle +  \frac{1}{2}\langle X^-, Q X^- \rangle \right]
 +  D_s(\mathbf{c},Q) \label{eq:doublingSym}\\
 &= \max_{(X,X')\in \pi((X_1,X'_1), \dots, (X_n,X_n')) } \EE\left[  \frac{1}{2}\langle X, Q X\rangle +  \frac{1}{2}\langle X', Q X' \rangle \right]
 +  D_s(\mathbf{c},Q)   \notag\\
   &\leq  \max_{X \in \pi( X_1, \dots, X_n ) }  \int_E \langle X, Q X\rangle d\mu  +  D_s(\mathbf{c},Q) . \notag
\end{align}
We conclude that  if symmetric $(X_i)_{i=1}^n$ satisfy 
\begin{align}
 \sum_{i=1}^n c_i h(X_i) &=  \max_{X \in \pi( X_1, \dots, X_n ) }  \EE \langle X, Q X\rangle ]  + D_s(\mathbf{c},Q), \label{eq:Extremizers}
\end{align}
then we have equality in \eqref{eq:doublingSym}, and \eqref{eq:Extremizers} remains satisfied when $X_i$ is replace by $X^+_i$ for each $i$.  Hence,  symmetric extremizers in \eqref{eq:mmEntropy} for sharp constant $D = D_s(\mathbf{c},Q)$ are closed under rescaled convolutions.  Iterating this argument ad infinitum, the LHS of \eqref{eq:Extremizers} tends to 
$$
\sum_{i=1}^n c_i h(N(0,K_i))
$$
by the entropic central limit theorem, where $K_i$ denotes the covariance of $\mu_i$.  Moreover, since the transformation $X_i \mapsto X^+_i$ preserves covariance, the quadratic term in the RHS of \eqref{eq:Extremizers} tends to 
$$
 \max_{X \in \pi( N(0,K_1), \dots, N(0,K_n) ) }  \EE[ \langle X, Q X\rangle ]  
$$
by an application of the  central limit theorem for $W_2$.  Hence, the Gaussian measures  $\mu_i = N(0,K_i)$ also satisfy \eqref{eq:extremized}, showing $D_s(\mathbf{c},Q) \leq D_g(\mathbf{c},Q)$.  This completes the proof. 
\end{proof}

The main objective of this subsection now follows easily. 
\begin{corollary}\label{cor:DsEqualsD}
We have $D_s(\mathbf{c},Q)  = D(\mathbf{c},Q)$.
\end{corollary}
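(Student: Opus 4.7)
The plan is to handle the two cases $D(\mathbf{c},Q) < \infty$ and $D(\mathbf{c},Q) = +\infty$ separately, using the ingredients already developed in this subsection.

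When $D(\mathbf{c},Q) < \infty$, there is nothing to do: Proposition \ref{prop:ExtremizedImpliesGaussian}(i) directly gives $D(\mathbf{c},Q) = D_s(\mathbf{c},Q)$.

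The case $D(\mathbf{c},Q) = +\infty$ splits further according to the dichotomy in Proposition \ref{prop:Qblocks}. First, if $\pi_i Q \pi_i^* \geq 0$ for every $i$, then I would apply Proposition \ref{prop:Qblocks}(ii) to conclude $D(\mathbf{c},Q+\delta \id_E) < \infty$ for every $\delta > 0$, and then Proposition \ref{prop:ExtremizedImpliesGaussian}(i) to conclude $D(\mathbf{c},Q+\delta \id_E) = D_s(\mathbf{c},Q+\delta \id_E)$. Sending $\delta \downarrow 0$ and invoking the right-continuity half of Lemma \ref{lem:PropertiesD} (both for $D$ and $D_s$) transfers the identity to $\delta = 0$, yielding $D(\mathbf{c},Q) = D_s(\mathbf{c},Q)$ (both could in principle still be $+\infty$, which is fine). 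Second, if $\pi_i Q \pi_i^* \not\geq 0$ for some $i$, then Proposition \ref{prop:Qblocks}(i) forces $D_g(\mathbf{c},Q) = +\infty$, and the sandwiching $D(\mathbf{c},Q) \geq D_s(\mathbf{c},Q) \geq D_g(\mathbf{c},Q)$ noted just before the subsection immediately gives $D_s(\mathbf{c},Q) = +\infty = D(\mathbf{c},Q)$.

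There is no real obstacle here; the only mildly nontrivial ingredient is the right-continuity in $\delta$ provided by Lemma \ref{lem:PropertiesD}, which is what lets one drop the regularizer $\delta \id_E$ after appealing to the finite-$D$ version of the equivalence. In particular, no new estimates or couplings are needed: the corollary is essentially bookkeeping that combines Lemma \ref{lem:PropertiesD}, Proposition \ref{prop:Qblocks}, and Proposition \ref{prop:ExtremizedImpliesGaussian}(i).
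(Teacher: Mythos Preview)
Your proof is correct and follows essentially the same approach as the paper: the block-positivity dichotomy from Proposition~\ref{prop:Qblocks}, the $\delta$-regularization combined with Proposition~\ref{prop:ExtremizedImpliesGaussian}(i), and the right-continuity from Lemma~\ref{lem:PropertiesD}. The only cosmetic difference is that the paper splits first on whether $\pi_i Q \pi_i^* \geq 0$ and never isolates the case $D(\mathbf{c},Q)<\infty$ separately; your first case is in fact redundant, since it is already covered by the $\delta$-regularization branch.
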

\begin{proof}
If $\pi_i Q \pi_i \not\geq 0$ for some $i$, then $D(\mathbf{c},Q)  \geq  D_s(\mathbf{c},Q) \geq D_g(\mathbf{c},Q) = +\infty$, where the last identity follows from Proposition \ref{prop:Qblocks}. So, assume $\pi_i Q \pi_i \geq 0$ for each $i$.  In this case, Proposition \ref{prop:Qblocks} ensures that $D(\mathbf{c},Q+\delta \id_E) <\infty$ for all $\delta>0$, so it follows from Lemma \ref{lem:PropertiesD} and Proposition \ref{prop:ExtremizedImpliesGaussian}(i)  that
$$
D_s(\mathbf{c},Q) = \lim_{\delta\downarrow 0} D_s(\mathbf{c},Q+\delta \id_E) = \lim_{\delta\downarrow 0} D(\mathbf{c},Q+\delta \id_E) =D(\mathbf{c},Q).
$$
\end{proof}

\subsection{The equivalence $D_s(\mathbf{c},Q) = D_g(\mathbf{c},Q)$.}

The goal of this subsection is to prove the identity $D_s(\mathbf{c},Q) = D_g(\mathbf{c},Q)$.  The proof rests primarily on the following duality principle, and several of the preparatory lemmas from the previous subsection.  
\begin{theorem}\label{thm:EntropicDuality}
Let $D \in \mathbb{R}\cup \{+\infty\}$. The following statements are equivalent.  
\begin{enumerate}[(A)]
\item If  even Borel functions $f_i: E_i \to \mathbb{R} \cup\{+\infty\}$   satisfy $e^{-f_i}\in L^1(E_i)$ and 
\begin{align}
\sum_{i=1}^n c_i f_i(x_i) \geq  \langle x, Q x\rangle, ~~~~ \forall x=(x_1, \dots, x_n) \in E, \label{eq:pointwiseInequality}
\end{align}
then 
\begin{align}
 \prod_{i=1}^n \left( \int_{E_i}e^{-f_i} d x_i\right)^{c_i} \leq e^{D}.  \label{IntegralIneq}
\end{align}
\item If $\mu_i \in \mathcal{P}(E_i)$, $i=1, \dots, n$, are symmetric then 
\begin{align}
  \sum_{i=1}^n c_i h(\mu_i) \leq \sup_{\mu \in \pi(\mu_1, \dots, \mu_n)}  \int_E \langle x, Q x\rangle d\mu(x) + D. \label{eq:entropyIneq}
\end{align}
\end{enumerate}
Moreover, log-concave $(\mu_i)_{i=1}^n$ saturate  \eqref{eq:entropyIneq} for the sharp constant $D= D_s(\mathbf{c},Q)$.
\end{theorem}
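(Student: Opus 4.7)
The plan is to derive (A) $\Leftrightarrow$ (B) from multimarginal Kantorovich duality, exploiting the symmetry of the measures (equivalently, the evenness of the test functions) via the averaging $f_i(\cdot) \mapsto (f_i(\cdot) + f_i(-\cdot))/2$, and then produce log-concave extremizers by a convex-envelope plus compactness argument. The key bridge is the identity $h(\mu_i) = \log \int e^{-f_i}\,dx_i + \int f_i\,d\mu_i$ whenever $\mu_i \propto e^{-f_i}$, combined with the multimarginal Kantorovich duality
\[
\sup_{\mu \in \pi(\mu_1,\ldots,\mu_n)} \int_E \langle x, Qx\rangle\,d\mu \;=\; \inf_{\sum_i c_i f_i(x_i) \geq \langle x, Qx\rangle} \sum_i c_i \int f_i\,d\mu_i.
\]
Since $Q$ is symmetric and symmetric $\mu_i$ are invariant under $x \mapsto -x$, the symmetrization $f_i \mapsto (f_i(\cdot)+f_i(-\cdot))/2$ preserves both the pointwise constraint and the objective, so the infimum on the right may be restricted to even $f_i$ when the marginals are symmetric.

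For (B) $\Rightarrow$ (A), given even $f_i$ with $e^{-f_i} \in L^1$ satisfying \eqref{eq:pointwiseInequality}, set $\mu_i \propto e^{-f_i}$, which is symmetric. Applying (B), bounding $\sup_\mu \int \langle x,Qx\rangle\,d\mu \leq \sum_i c_i \int f_i\,d\mu_i$ by testing the pointwise constraint against any coupling, and canceling the common $\sum_i c_i \int f_i\,d\mu_i$ yields \eqref{IntegralIneq}; the regularization $f_i \mapsto f_i + \epsilon|x|^2$ followed by $\epsilon \downarrow 0$ handles the case where $\int f_i\,d\mu_i$ is not a priori finite. The converse (A) $\Rightarrow$ (B) is dual: for symmetric $\mu_i$ and any admissible even $f_i$, (A) gives $\sum_i c_i h(\mu_i) \leq D + \sum_i c_i \int f_i\,d\mu_i$, and infimizing over admissible even $f_i$ and invoking the symmetrized Kantorovich duality produces \eqref{eq:entropyIneq}.

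For the moreover, Lemma \ref{lem:PropertiesD} together with Proposition \ref{prop:Qblocks} reduces matters to the case $\pi_i Q \pi_i^* \geq 0$ for each $i$, under which $x \mapsto \langle x, Qx\rangle$ is convex in each coordinate $x_i$ with the others fixed. The sharp constant $D_s(\mathbf{c},Q)$ in (A) is then achieved over \emph{convex} even $f_i$: replacing $f_i$ by its convex envelope in $x_i$ yields a smaller even convex function that still satisfies the pointwise constraint, since the right-hand side (viewed as a function of $x_i$ alone) is a convex minorant that must also lie below the envelope. A compactness argument in the space of even convex functions, normalized e.g.\ by $\int e^{-f_i}=1$ and controlled via the quadratic coercive bound on $\sum_i c_i f_i$, produces an extremal tuple $(f_i^*)$; the associated $\mu_i^* \propto e^{-f_i^*}$ are then symmetric and log-concave. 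A KKT/complementary-slackness analysis of the variational problem shows that the Lagrange multiplier for the infinite family of pointwise constraints is realized as a coupling $\mu^*$ of $(\mu_i^*)$ supported on the contact set $\{\sum_i c_i f_i^*(x_i) = \langle x, Qx\rangle\}$; this yields $\sum_i c_i \int f_i^*\,d\mu_i^* = \int \langle x, Qx\rangle\,d\mu^* = \sup_\mu \int \langle x, Qx\rangle\,d\mu$, identifying $(\mu_i^*)$ as log-concave extremizers of \eqref{eq:entropyIneq} at $D = D_s(\mathbf{c},Q)$. The main obstacle will be the existence step: assembling a compactness/tightness argument in the space of even convex functions with suitable normalization and tail control (while accommodating $+\infty$-valued $f_i$). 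Once in hand, the duality framework and complementary slackness finish the proof.
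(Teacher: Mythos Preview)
Your treatment of the equivalence (A)$\Leftrightarrow$(B) is essentially the paper's: the Gibbs variational formula bridges entropy and $\log\int e^{-f_i}$, and multimarginal Kantorovich duality with the symmetrization $f_i\mapsto\tfrac12(f_i(\cdot)+f_i(-\cdot))$ closes the loop. One point you glide over in (A)$\Rightarrow$(B): Kantorovich potentials need not satisfy $e^{-f_i}\in L^1(E_i)$, so they cannot be tested directly in (A). The paper handles this inside the (A)$\Rightarrow$(B) argument itself: after reducing to $\pi_iQ\pi_i^*\geq 0$ (else $D=+\infty$ by Lemma~\ref{lem:PropertiesDFunc}), it replaces each $f_i$ by the convex envelope coming from the constraint and then adds $\epsilon'|x_i|^2$ to force uniform convexity and hence integrability. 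This is exactly your convex-envelope step, just deployed earlier; the ideas are all in your outline, only the bookkeeping differs.

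Where the two routes genuinely diverge is the ``moreover''. The paper's argument is essentially free once the above is in place: the convexification already shows that even \emph{convex} $f_i$ saturate the sharp constant in (A); feeding any such near-optimal convex $f_i$ back into the (B)$\Rightarrow$(A) computation with $\mu_i\propto e^{-f_i}$ (which is log-concave) gives
\[
\sum_i c_i h(\mu_i)-\sup_{\mu}\int\langle x,Qx\rangle\,d\mu\;\geq\;\sum_i c_i\log\int e^{-f_i}\,dx_i,
\]
and the right side can be pushed arbitrarily close to $D_s(\mathbf{c},Q)$. No compactness, no extremizer, no KKT analysis is required, because ``saturate'' here means only that the supremum over log-concave measures equals $D_s$, not that it is attained. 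Your proposed route---compactness in the space of even convex functions plus a complementary-slackness argument to produce an optimal coupling supported on the contact set---would establish the stronger existence statement, but that is not what the theorem asserts, and the compactness step you correctly flag as ``the main obstacle'' is genuinely delicate (the paper carries out an analogous existence argument only later, in Corollary~\ref{cor:DsEqualsDg}, after a $Q\mapsto Q+\delta\,\id$ regularization and using $W_2$-compactness of log-concave measures with bounded second moments). So your plan is not wrong, but it works much harder than necessary on this point.
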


We'll need a functional counterpart to Proposition \ref{prop:Qblocks}(i).
\begin{lemma}\label{lem:PropertiesDFunc} If the sharp constant $D$ such that \eqref{IntegralIneq} holds for all admissible functions satisfying \eqref{eq:pointwiseInequality} is finite,  then $\pi_i Q \pi^*_i \geq 0$ for each $i=1,\dots, n$.  
\end{lemma}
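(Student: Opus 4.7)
The plan is to prove the contrapositive: if $\pi_i Q \pi_i^* \not\geq 0$ for some $i$, then admissible test functions can be constructed which render the left-hand side of \eqref{IntegralIneq} arbitrarily large while \eqref{eq:pointwiseInequality} continues to hold. Fixing such an $i$, pick a unit eigenvector $v \in E_i$ of $\pi_i Q \pi_i^*$ with negative eigenvalue $-|\lambda| < 0$, and let $P_v$ denote orthogonal projection onto $\mathbb{R} v$ inside $E_i$.

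For parameters $\epsilon>0$ and $M, M_0 > 0$ to be chosen, take the quadratic (hence even) test functions
\begin{align*}
f_i(x_i) &= \tfrac{\epsilon}{2}(v\cdot x_i)^2 + \tfrac{M}{2}\bigl|x_i - (v\cdot x_i)v\bigr|^2, \\
f_j(x_j) &= \tfrac{M_0}{2}|x_j|^2 \qquad (j \neq i).
\end{align*}
These are manifestly even and in $L^1(E_j)$, and each has a straightforward Gaussian integral; in particular $\int_{E_i} e^{-f_i}\, dx_i = \sqrt{2\pi/\epsilon}\,(2\pi/M)^{(d_i-1)/2}$, which diverges as $\epsilon \downarrow 0$ while the remaining factors in $\prod_j (\int e^{-f_j})^{c_j}$ stay bounded.

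It remains to verify \eqref{eq:pointwiseInequality} for some fixed $M, M_0$ \emph{independent} of $\epsilon$. Decompose an arbitrary $x \in E$ as $x = s\,\pi_i^* v + y$, where $s = v \cdot x_i$ and $y$ has $v$-component zero in its $i$-th block. Then $\sum_j c_j f_j(x_j)$ equals $\tfrac{c_i \epsilon}{2}s^2$ plus a quadratic form in $y$ bounded below by $\kappa|y|^2$ for $\kappa := \tfrac12\min\bigl(c_i M,\ \min_{j\neq i}c_j M_0\bigr)$. On the other hand, $\langle x, Qx\rangle = -|\lambda|s^2 + 2s\langle \pi_i^* v, Qy\rangle + \langle y, Qy\rangle \leq -|\lambda|s^2 + 2\|Q\|_{\mathrm{op}}|s||y| + \|Q\|_{\mathrm{op}}|y|^2$. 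Applying AM-GM to the cross term with weight $|\lambda|/2$ on $s^2$ reduces the required bound to $(\tfrac{c_i\epsilon}{2} + \tfrac{|\lambda|}{2})s^2 + (\kappa - \|Q\|_{\mathrm{op}} - 2\|Q\|_{\mathrm{op}}^2/|\lambda|)|y|^2 \geq 0$, which holds as soon as $\kappa \geq \|Q\|_{\mathrm{op}} + 2\|Q\|_{\mathrm{op}}^2/|\lambda|$. Since this threshold depends only on $\|Q\|_{\mathrm{op}}$ and $|\lambda|$, we may fix $M, M_0$ once and for all, then let $\epsilon \downarrow 0$: the pointwise inequality persists while the left-hand side of \eqref{IntegralIneq} diverges, contradicting the assumed finiteness of $D$.

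The main obstacle is purely technical: bookkeeping the cross terms in $\langle x, Qx\rangle$ between the bad direction $v$ and the other components to confirm that the $\epsilon$-independent constants $M, M_0$ can absorb them uniformly. Once this AM-GM/Schur-complement style estimate is in hand, the construction is essentially forced, and the divergence of the Gaussian integral along the flattening direction $v$ delivers the contradiction.
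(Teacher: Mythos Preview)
Your proof is correct and follows essentially the same strategy as the paper: argue by contrapositive, pick a unit eigenvector $v$ of $\pi_i Q\pi_i^*$ with negative eigenvalue, and build even quadratic test functions that are steep in all directions orthogonal to $v$ while flattening along $v$ so that the product of integrals blows up. The only cosmetic difference is that the paper uses the exact negative quadratic $\tfrac{\lambda}{2}|\langle v,x_i\rangle|^2$ on a strip $\{|\langle v,x_i\rangle|\leq R\}$ (setting $f_i=+\infty$ outside and sending $R\to\infty$), whereas you keep $f_i$ globally Gaussian with a small positive coefficient $\epsilon$ along $v$ and send $\epsilon\downarrow 0$; your explicit AM--GM bookkeeping actually fills in a bit more detail than the paper's assertion that ``$\delta$ sufficiently large (not depending on $R$)'' suffices.
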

\begin{proof}
We'll show that if $\pi_i Q \pi_i^*\not\geq 0$, then  we must have $D =+\infty$ in \eqref{IntegralIneq}.  Without loss of generality, we can consider  $i=1$.  
Suppose   that $v \in E_1$ an eigenvector of $\pi_1 Q \pi_1^*$ with negative eigenvalue $\lambda<0$ and unit length $|v|=1$.  Fix a parameter $R>0$. There is $\delta>0$ sufficiently large (not depending on $R$) such that for
$$
c_1 f_1(x_1) := \begin{cases}
\frac{\delta}{2} \langle x_1, (\id_{E_1} - v\otimes v ) x_1 \rangle + \frac{\lambda}{2} | \langle  v ,  x_1\rangle  |^2 & \mbox{if $ |  \langle  v ,  x_1\rangle | \leq R$}\\
+ \infty & \mbox{if $ | \langle  v ,  x_1\rangle  | > R$}
\end{cases},  ~~x_1\in E_1
$$
and 
$$
c_i f_i(x_i) := \frac{\delta}{2} | x_i|^2 ,  ~~x_i\in E_i, ~~i=2,\dots, n,
$$
we have 
$$
\sum_{i=1}^n c_i f_i(x_i) \geq \langle x, Q x\rangle, ~~\forall x=(x_1, \dots, x_n)\in E. 
$$
 Direct computation shows 
$$
\int_{E_i} e^{-f_i}dx_i = \begin{cases}
\left( \frac{c_i 2 \pi}{\delta}\right)^{(\dim(E_i)-1)/2}  \int_{-R}^R e^{-\frac{\lambda}{2 c_i} t^2} dt & \mbox{if $i=1$}\\
\left( \frac{c_i 2 \pi}{\delta}\right)^{\dim(E_i)/2} & \mbox{if $i=2,\dots, n$.}
\end{cases}
$$
Since $\lambda < 0$ and $\delta$ does not depend on $R$, we find that we must have $D =+\infty$ in \eqref{IntegralIneq} by letting $R\to \infty$.  
\end{proof}

\begin{proof}[Proof of Theorem \ref{thm:EntropicDuality}]
$(B) \Rightarrow (A)$. Let $(f_i)_{i=1}^n$ be even Borel functions satisfying \eqref{eq:pointwiseInequality}.  Take  (symmetric) $\mu_i$ to have density proportional to $e^{-{f_i}} $. Assume for now that each $\mu_i$ has compact support (this is always possible by setting $f_i = +\infty$ outside the support of $\mu_i$, which preserves \eqref{eq:pointwiseInequality}).  Due to the assumption of compact support, $\mu_i$ has finite second moments, and therefore well-defined entropy $h(\mu_i) < \infty$.    In particular, 
$$
h(\mu_i) =   \log \left( \int e^{-f_i} d x_i \right)  + \int  f_i d\mu_i.
$$
By \eqref{eq:pointwiseInequality}, one may see that each $f_i$ is bounded from below on its (compact) essential support.  Since $h(\mu_i)<\infty$,   it follows that $f_i  \in L^1(\mu_i)$. 
 Thus, using definitions and (B), we have 
\begin{align*}
\sum_{i=1}^n c_i \left( \log \left( \int e^{-f_i} d x_i \right)  + \int  f_i d\mu_i  \right)&=  \sum_{i=1}^n c_i h(\mu_i) \\
&\leq  \max_{\mu \in \pi(\mu_1, \dots, \mu_n)} \int \langle x, Q x\rangle d\mu  +D\\
&\leq   \sum_{i=1}^nc_i \int f_i  d\mu_i  +D.
\end{align*}
Cancelling the $\int f_i  d\mu_i$ terms on both sides, we conclude (A) holds.  The assumption that $\mu_i$ is compactly supported can be removed using monotone convergence.

We now show $(A) \Rightarrow (B)$.    We can assume the sharp constant $D$ in \eqref{IntegralIneq} is finite, else the claim is trivially true.  Therefore, by Lemma \ref{lem:PropertiesDFunc}, we may assume that $\pi_i Q \pi_i^* \geq 0$ for each $i=1,\dots, n$.  

Fix symmetric measures $\mu_i \in \mathcal{P}(E_i)$, and assume without loss of generality that $h(\mu_i) > -\infty$.   
By the multimarginal Kantorovich duality, for any $\epsilon>0$, there exist even $f_i\in L^1(\mu_i)$ satisfying \eqref{eq:pointwiseInequality} and 
\begin{align}
\sum_{i=1}^n c_i \int f_i(x_i) d\mu_i  <   \max_{\mu \in \pi(\mu_1, \dots, \mu_n)} \int \langle x, Q x\rangle d\mu + \epsilon. \label{maxCouplingOrig}
\end{align}
Note that \eqref{eq:pointwiseInequality} implies
$$
c_i f_i(x_i) \geq \sup_{x_j \in E_j, j\neq i} \left\{ \langle x, Q x \rangle - \sum_{j=1, i\neq j}^n c_i f_i(x_i) \right\}.
$$
Since $\pi_i Q \pi_i^* \geq 0$, the RHS is a pointwise supremum of even functions that are convex in $x_i$, and is therefore  even and convex in $x_i$.  Both \eqref{eq:pointwiseInequality} and  inequality \eqref{maxCouplingOrig} are preserved by replacing $c_i f_i$ with this even convex function.  Repeating this same argument for each $i=1, \dots, n$ shows that we may assume without loss of generality that each $f_i$ is even and convex.  In fact, we may add $\epsilon' |x_i|^2$ to each $f_i$; this obviously preserves \eqref{eq:pointwiseInequality} for any $\epsilon'>0$; and, for $\epsilon'>0$ sufficiently small, \eqref{maxCouplingOrig} will also be preserved since the second moments $(\mu_i)_{i=1}^n$ are finite.  Therefore, we may assume without loss of generality that each $f_i$ is uniformly convex.  In particular, each $e^{-f_i} \in L^1(E_1)$.

Now, starting with the variational representation for entropy and   (A), we have 
\begin{align*}
\sum_{i=1}^n c_i h(\mu_i)  &\leq \sum_{i=1}^n c_i \left( \int f_i d\mu_i +  \log\left( \int_{E_i} e^{-f_i} dx_i \right)   \right) 
\leq   \max_{\mu \in \pi(\mu_1, \dots, \mu_n)} \int \langle x, Q x\rangle d\mu + \epsilon + D.
\end{align*}
We conclude (A)$\Rightarrow$(B).

If $\pi_iQ \pi^*_i \not\geq 0$ for some $i$, then Proposition \ref{prop:Qblocks} ensures that $D_s(\mathbf{c},Q)= +\infty$, and it suffices to consider  Gaussian (and therefore log-concave) measures to achieve  saturation.  On the other hand, if $\pi_iQ \pi^*_i  \geq 0$ for all $i=1, \dots, n$, then the argument above shows that it suffices to consider even convex $(f_i)_{i=1}^n$ satisfying \eqref{eq:pointwiseInequality} to saturate \eqref{IntegralIneq} for sharp constant $D$.  Since sharp constants in (A) and (B) are now established to be the same, returning to the argument in the proof of (B)$\Rightarrow$(A) shows that it suffices to consider log-concave $(\mu_i)_{i=1}^n$ to saturate \eqref{eq:entropyIneq}.
\end{proof}

\begin{lemma}
Let $(\nu_k)_{k\geq 1}$ be a sequence of zero-mean log-concave probability measures on $\mathbb{R}^d$, with uniformly bounded second moments $\int |x|^2 d\nu_k \leq M$ for all $k\geq 1$.  There exists a probability measure $\nu^*$ such that $W_2(\nu_{k_n}, \nu^*)\to 0$ for some suitable subsequence $(\nu_{k_n})_{n\geq 1}$. 
\end{lemma}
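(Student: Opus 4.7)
The plan is to first extract a weakly convergent subsequence via tightness, then upgrade weak convergence to $W_2$ convergence by exploiting the concentration properties of log-concave measures. As a starting point, the uniform second-moment bound together with Markov's inequality makes the family $(\nu_k)_{k \geq 1}$ tight on $\mathbb{R}^d$. Prokhorov's theorem then yields a subsequence $(\nu_{k_n})$ converging weakly to some Borel probability measure $\nu^*$. Because log-concavity is preserved under weak limits, $\nu^*$ is log-concave, zero-mean, and (by lower semicontinuity of $\mu \mapsto \int |x|^2 d\mu$ under weak convergence) also satisfies $\int |x|^2 d\nu^* \leq M$.

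The remaining work is to upgrade weak convergence to $W_2$ convergence, for which it suffices to establish uniform integrability of $|x|^2$ along the subsequence. This is where log-concavity does real work: by the Borell reverse H\"older inequality for log-concave measures, there exists a universal constant $C$ such that
$$
\int |x|^4 \, d\nu \leq C \left( \int |x|^2 \, d\nu \right)^2
$$
for every log-concave probability measure $\nu$ on $\mathbb{R}^d$. Applied uniformly to $\nu_{k_n}$, this yields $\sup_n \int |x|^4 d\nu_{k_n} \leq CM^2 < \infty$, and a uniformly bounded higher moment directly implies uniform integrability of $|x|^2$; in particular, $\int |x|^2 d\nu_{k_n} \to \int |x|^2 d\nu^*$.

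With weak convergence and convergence of quadratic moments now both in hand, the standard characterization of $W_2$ convergence (weak convergence plus convergence of second moments) closes the argument. The only nontrivial ingredient is the passage from weak to $W_2$ convergence, which would genuinely fail without an integrability improvement on $|x|^2$: this is precisely where the log-concavity hypothesis is indispensable, as Borell's reverse H\"older inequality is what prevents mass from leaking to infinity under weak limits. Everything else (tightness, Prokhorov, and the $W_2$ characterization) is entirely routine.
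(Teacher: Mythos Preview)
Your proof is correct and follows essentially the same route as the paper: tightness via the second-moment bound, Prokhorov to extract a weak limit, then Borell's reverse H\"older inequality to bound fourth moments uniformly and obtain uniform integrability of $|x|^2$, upgrading weak convergence to $W_2$ convergence. The only minor discrepancy is that the paper states the Borell constant $C$ as depending on the dimension rather than being universal, but since $d$ is fixed this is immaterial (and your additional remarks about $\nu^*$ inheriting log-concavity and mean zero, while correct, are not needed for the conclusion).
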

\begin{proof}
By the uniform bound on second moments, the collection $(\nu_k)_{k\geq 1}$ is tight, so by Prokhorov's theorem there exists a probability measure $\nu^*$ and a subsequence $(\nu_{k_n})_{n\geq 1}$ such that $\nu_{k_n} \to \nu^*$ weakly.  To show convergence in $W_2$, it suffices to show that the second moments of $(\nu_{k_n})_{n\geq 1}$ converge.  For the latter, it suffices to establish uniform integrability.  Toward this end, recall that higher-order central moments of log-concave measures are controlled by lower moments \cite{Bor1975}.  That is, there is a constant $C$ (depending only on dimension) such that 
$$
\int |x|^4 d\nu_{k_n} \leq C \left( \int |x|^2 d\nu_{k_n}  \right)^2, ~~n\geq 1.
$$
Thus, 
$$
\int_{|x|\geq r} |x|^2 d\nu_{k_n} \leq \frac{1}{r^2} \int_{|x|\geq r} |x|^4 d\nu_{k_n} \leq \frac{C}{r^2}  M^2~~~\Rightarrow~~~\lim_{r\to \infty} \sup_{n\geq 1} \int_{|x|\geq r} |x|^2 d\nu_{k_n} =0.
$$
This is the desired uniform integrability, and it follows that $W_2(\nu_{k_n}, \nu^*)\to 0$.
\end{proof}

We now turn our attention to the main objective of this subsection.
\begin{corollary}\label{cor:DsEqualsDg}
$D_s(\mathbf{c},Q) = D_g(\mathbf{c},Q)$.
\end{corollary}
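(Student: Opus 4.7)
The plan is to find symmetric extremizers of the entropy inequality \eqref{eq:mmEntropy}, restricting to the log-concave class guaranteed by Theorem \ref{thm:EntropicDuality}, and then apply Proposition \ref{prop:ExtremizedImpliesGaussian}(ii). The first step disposes of the degenerate case: if $\pi_i Q \pi_i^* \not\geq 0$ for some $i$, then Proposition \ref{prop:Qblocks}(i) gives $D_g(\mathbf{c},Q) = +\infty$, and $D_s(\mathbf{c},Q) \geq D_g(\mathbf{c},Q) = +\infty$ trivially. So assume $\pi_i Q \pi_i^* \geq 0$ for every $i$.

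Since both $D_s$ and $D_g$ satisfy the right-continuity statement of Lemma \ref{lem:PropertiesD}, it suffices to prove $D_s(\mathbf{c}, Q+\delta\, \id) = D_g(\mathbf{c}, Q+\delta\, \id)$ for each $\delta>0$ and then let $\delta\downarrow 0$. Fix such a $\delta$ and write $Q' := Q + \delta\, \id$. Then Proposition \ref{prop:Qblocks}(ii) gives $D_s(\mathbf{c},Q')\leq D(\mathbf{c},Q')<\infty$. By the last assertion of Theorem \ref{thm:EntropicDuality}, log-concave symmetric measures saturate \eqref{eq:entropyIneq} with sharp constant $D_s(\mathbf{c},Q')$; choose a corresponding extremizing sequence $(\mu_i^{(k)})_{i=1}^n$, $k\geq 1$, consisting of symmetric (hence centered) log-concave measures.

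The upper bound derived in the proof of Proposition \ref{prop:Qblocks}(ii) forces the covariances $K_i^{(k)}$ of the $\mu_i^{(k)}$ to have uniformly bounded traces, since the extra $-\delta \sum_i \Tr(K_i)$ term would otherwise drive the objective to $-\infty$, contradicting extremization. Likewise, the covariances cannot degenerate, since an eigenvalue of $K_i^{(k)}$ tending to zero forces $h(\mu_i^{(k)}) \to -\infty$. Armed with these two-sided bounds, apply the preceding $W_2$-compactness lemma to each index $i$ separately, extracting a joint subsequence along which $\mu_i^{(k)} \to \mu_i^*$ in $W_2$ for every $i$. Symmetry and log-concavity pass to the $W_2$ limit, and the limiting covariances remain positive definite, so each $\mu_i^*$ belongs to $\mathcal{P}(E_i)$.

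It remains to verify that $(\mu_i^*)_{i=1}^n$ achieves equality in \eqref{eq:entropyIneq} at sharp constant $D_s(\mathbf{c},Q')$; then Proposition \ref{prop:ExtremizedImpliesGaussian}(ii) yields $D_s(\mathbf{c},Q') = D_g(\mathbf{c},Q')$, and taking $\delta\downarrow 0$ (via Lemma \ref{lem:PropertiesD} applied to both $D_s$ and $D_g$) finishes the proof. Two continuity facts do the job: first, the multimarginal optimal transport functional $(\nu_1,\dots,\nu_n) \mapsto \sup_{\mu\in\pi(\nu_1,\dots,\nu_n)} \int \langle x, Q'x\rangle\, d\mu$ is continuous under $W_2$-convergence of the marginals, because the cost is continuous with quadratic growth and the $W_2$ topology controls second moments; second, the Shannon entropy $\nu \mapsto h(\nu)$ is continuous under $W_2$-convergence within the class of log-concave measures with uniformly bounded, non-degenerate covariances (their densities converge pointwise almost everywhere and are uniformly dominated, so the integrand in $h$ converges). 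I expect the main obstacle to be the clean justification of these two continuity statements, particularly the entropy continuity, where one must rule out mass escape to infinity and density degeneracy; log-concavity combined with the uniform covariance bounds established above is exactly what makes both controls tractable.
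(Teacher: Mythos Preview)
Your proposal is correct and follows essentially the same route as the paper: reduce to $\pi_i Q\pi_i^*\geq 0$, perturb by $\delta\,\id$, take a log-concave symmetric extremizing sequence with uniformly bounded second moments, pass to a $W_2$-limit, verify the limit is an extremizer, and invoke Proposition~\ref{prop:ExtremizedImpliesGaussian}(ii). The one place you work harder than necessary is the entropy step: the paper does not establish full continuity of $h$ (and hence does not need your non-degeneracy argument for the covariances), but simply uses that Shannon entropy is \emph{upper} semicontinuous under $W_2$-convergence---this follows from lower semicontinuity of relative entropy together with convergence of second moments---and upper semicontinuity is all that is needed to conclude that the limit $(\mu_i^*)$ attains $D_s(\mathbf{c},Q')$.
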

\begin{proof}
By Proposition \ref{prop:Qblocks}(i), it suffices to assume $\pi_i Q \pi_i \geq 0$ for every $i\in \{1,\dots, n\}$.   So, with this assumption made, fix $\delta>0$ and let $\mu_i \in \mathcal{P}(E_i)$ be symmetric with   covariance   $K_i$, $i=1,\dots, n$.  Recall from the calculation in the proof of Proposition \ref{prop:Qblocks} that, with  the assumption that $\pi_i Q \pi_i \geq 0$ for every $i\in \{1,\dots, n\}$, we have 
\begin{align*}
\sum_{i=1}^n c_i h(\mu_i) - \sup_{\mu \in \pi(\mu_1, \dots, \mu_n) } \int_E \langle x, (Q+ \delta\id_E) x\rangle d\mu 
&\leq \sum_{i=1}^n \frac{c_i\dim(E_i)}{2} \log\left(\frac{2 \pi e}{\dim(E_i)} \Tr(K_i) \right)  - \delta \sum_{i=1}^n \Tr(K_i).
\end{align*}
Hence, in computing $D_s(\mathbf{c},Q+ \delta \id_E)$, it suffices to consider measures $\mu_i$ with $\Tr(K_i)\leq C(\mathbf{c},\delta)$, where   $C(\mathbf{c},\delta)$ is a constant depending only on $\delta$ and the coefficients $\mathbf{c}$.  Indeed, if any one $\Tr(K_i)$ tends to infinity, then the upper bound  will eventually be less than $D_g(\mathbf{c},Q+ \delta \id_E)$.  In particular, to compute $D_s(\mathbf{c},Q+ \delta \id_E)$, it suffices to restrict attention to symmetric measures $\mu_i \in \mathcal{P}(E_i)$ with second moments uniformly bounded by $C(\mathbf{c},\delta)$.   Moreover, by Theorem \ref{thm:EntropicDuality}, it  suffices to further restrict attention to log-concave measures.  

So, let $(\mu_i^{(k)})_{i=1}^n$, $k\geq 1$ be a sequence of symmetric log-concave measures, with second moments uniformly bounded by $C(\mathbf{c},\delta)$, such that 
$$
\lim_{k\to \infty} \left( \sum_{i=1}^n c_i h(\mu^{(k)}_i) - \sup_{\mu \in \pi(\mu^{(k)}_1, \dots, \mu^{(k)}_n) } \int_E \langle x, (Q+ \delta\id_E) x\rangle d\mu\right)  =  D_s(\mathbf{c},Q+ \delta \id_E).  
$$
By passing to a subsequence if necessary, there exists  $\mu_i^*$ such that $W_2(\mu_i^{(k)}, \mu_i^*)\to 0$ for each $i=1,\dots, n$.  In particular,  
$$
\lim_{k\to\infty} \sup_{\mu \in \pi(\mu^{(k)}_1, \dots, \mu^{(k)}_n)} \int_E \langle x, (Q+ \delta\id_E) x\rangle d\mu = 
 \sup_{\mu \in \pi(\mu^{*}_1, \dots, \mu^{*}_n)} \int_E \langle x, (Q+ \delta\id_E) x\rangle d\mu .
$$
By lower semicontinuity of relative entropy and convergence of second moments, we have  the upper semicontinuity estimate for Shannon entropy
$$
\limsup_{k\to \infty} \sum_{i=1}^n c_i h(\mu^{(k)}_i)  \leq  \sum_{i=1}^n c_i h(\mu^{*}_i) .
$$
Thus, we conclude 
$$
 \sum_{i=1}^n c_i h(\mu^{*}_i)   =  \sup_{\mu \in \pi(\mu^{*}_1, \dots, \mu^{*}_n) } \int_E \langle x, (Q+ \delta\id_E) x\rangle d\mu  + D_s(\mathbf{c},Q+ \delta \id_E).
$$
The measures $(\mu_i^*)_{i=1}^n$ are obviously symmetric, so an application of Proposition \ref{prop:ExtremizedImpliesGaussian}(ii) ensures $D_s(\mathbf{c},Q+\delta \id_E) = D_g(\mathbf{c},Q+\delta \id_E)$.  Using Lemma \ref{lem:PropertiesD}, we may take $\delta\downarrow 0$ to conclude the proof. 
\end{proof}

Theorem \ref{thm:GaussianSaturation}   now follows immediately  by combining  Corollaries \ref{cor:DsEqualsD} and \ref{cor:DsEqualsDg}.

\section{Proof of Theorem \ref{thm:dualityAsymm}}

\begin{proof}[Proof of Theorem \ref{thm:dualityAsymm}]
Begin by noting that if $\mu_i$ has mean $\tau_i \in E_i$, $i=1, \dots, n$, then by translation-invariance of Shannon entropy, \eqref{eq:mmEntropy} with sharp constant $D = D_g(\mathbf{c},Q)$ is equivalent to 
\begin{align}
  \sum_{i=1}^n c_i h(\mu_i) \leq \sup_{\mu \in \pi(\mu_1, \dots, \mu_n)}  \int_E \langle x, Q x\rangle d\mu(x)  - \langle \tau, Q \tau\rangle + D_g(\mathbf{c},Q) \label{eq:entropyIneqMeanShift}
\end{align}
for $\tau := (\tau_1, \dots, \tau_n) \in E$.  Indeed the incorporation of the correction term $\langle \tau, Q \tau\rangle$ makes both sides invariant to translations of the measures $(\mu_i)_{i=1}^n$.  

The proof now follows that of Theorem \ref{thm:EntropicDuality}, which we repeat for completeness.  In particular,   let $(f_i)_{i=1}^n$ be Borel functions with well-defined barycenters satisfying \eqref{eq:pointwiseInequalityIntro}.  Take  $\mu_i$ to have density proportional to $e^{-{f_i}} $. Assume for now that each $\mu_i$ has compact support (this is always possible by setting $f_i = +\infty$ outside the support of $\mu_i$, which preserves \eqref{eq:pointwiseInequalityIntro}).  Evidently, $\mu_i$ has mean $\bary(f_i)$ and, due to the assumption of compact support, $\mu_i$ has finite second moments, and therefore well-defined entropy $h(\mu_i) < \infty$.    In particular, 
$$
h(\mu_i) =   \log \left( \int e^{-f_i} d x_i \right)  + \int  f_i d\mu_i.
$$
By \eqref{eq:pointwiseInequalityIntro}, one may see that each $f_i$ is bounded from below on its essential support.  Since $h(\mu_i)<\infty$,   it follows that $f_i  \in L^1(\mu_i)$. 
 Thus, using definitions and \eqref{eq:entropyIneqMeanShift}, we have 
\begin{align*}
\sum_{i=1}^n c_i \left( \log \left( \int e^{-f_i} d x_i \right)  + \int  f_i d\mu_i  \right)&=  \sum_{i=1}^n c_i h(\mu_i) \\
&\leq  \max_{\mu \in \pi(\mu_1, \dots, \mu_n)} \int \langle x, Q x\rangle d\mu - \mathcal{Q}(f_1,\dots, f_n) +D_g(\mathbf{c},Q)\\
&\leq   \sum_{i=1}^nc_i \int f_i  d\mu_i - \mathcal{Q}(f_1,\dots, f_n) +D_g(\mathbf{c},Q).
\end{align*}
Cancelling the $\int f_i  d\mu_i$ terms on both sides, we conclude \eqref{IntegralIneqIntro} holds.  The assumption that $\mu_i$ is compactly supported can be removed using monotone convergence.  Now, the constant is seen to be sharp by restricting attention to even functions (which have barycenter zero) and applying Theorem \ref{thm:EntropicDuality}, which has sharp constant $D = D_g(\mathbf{c},Q)$ by Theorem \ref{thm:GaussianSaturation}.
\end{proof}

\section{Proof of Theorem \ref{thm:multimarginal}}\label{sec:BarycenterProof}
We give a direct, independent proof of Theorem \ref{thm:multimarginal}.  It rests on two well-known results.  The first is the symmetrized Talagrand inequality due to Fathi \cite{MF}; see the recent paper \cite{CFM} for a short martingale proof. 
\begin{lemma}[Symmetrized Talagrand inequality]\label{lem:SymmTal}
Let $\mu, \nu \in \mathcal{P}(\R^d)$ be centered.  We have  
\begin{align}
\frac{1}{2} W_2(\mu, \nu)^2 \leq     D(\mu \| \gamma) + D(\nu \| \gamma) . \label{eq:symmTalagrand}
\end{align}
Moreover,  equality holds if and only if $\mu =  N(0,C)$ and $\nu =  N(0,C^{-1})$ for positive definite $C$.  
\end{lemma}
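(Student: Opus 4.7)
The plan is to derive the lemma directly from Theorem \ref{thm:GaussianSaturation}, which is already in hand by this point in the paper; this gives a short alternative to Fathi's optimal-transport proof \cite{MF} and the martingale proof of \cite{CFM}. The strategy has two steps: rewrite the symmetrized Talagrand inequality as an instance of the entropy inequality \eqref{eq:mmEntropy}, and then check the Gaussian constant by a direct one-variable calculation.

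For centered $\mu \in \mathcal{P}(\R^d)$ one has the identity $D(\mu\|\gamma) = -h(\mu) + \tfrac12 \int|x|^2\,d\mu + \tfrac{d}{2}\log(2\pi)$, together with the classical expansion
\[
\tfrac12 W_2^2(\mu,\nu) = \tfrac12 \int|x|^2\,d\mu + \tfrac12 \int|y|^2\,d\nu - \sup_{\pi \in \pi(\mu,\nu)} \int \langle x, y\rangle\,d\pi(x,y).
\]
Substituting these into \eqref{eq:symmTalagrand} shows the desired inequality is equivalent to
\[
h(\mu) + h(\nu) \le \sup_{\pi \in \pi(\mu,\nu)} \int \langle x, y\rangle\,d\pi(x,y) + d \log(2\pi),
\]
which is precisely \eqref{eq:mmEntropy} with $n = 2$, $E_1 = E_2 = \R^d$, $c_1 = c_2 = 1$, and $Q$ the symmetric block matrix defined by $\langle (x, y), Q (x, y)\rangle = \langle x, y\rangle$. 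Theorem \ref{thm:GaussianSaturation} then yields $D(\mathbf{c}, Q) = D_g(\mathbf{c}, Q)$, so it suffices to verify $D_g(\mathbf{c}, Q) = d\log(2\pi)$ on centered Gaussians. For $\mu = N(0, K_1)$ and $\nu = N(0, K_2)$ the optimal cross term equals $\Tr((K_1^{1/2} K_2 K_1^{1/2})^{1/2})$, and writing $A := K_1^{1/2} K_2 K_1^{1/2}$ with eigenvalues $\lambda_1, \dots, \lambda_d$, the inequality collapses to
\[
\sum_{i=1}^d \bigl(\sqrt{\lambda_i} - \tfrac12 \log \lambda_i - 1\bigr) \ge 0,
\]
which holds termwise from the scalar fact $t - 1 - \log t \ge 0$ applied at $t = \sqrt{\lambda_i}$, with equality iff every $\lambda_i = 1$, equivalently $K_2 = K_1^{-1}$.

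The main obstacle is the equality characterization for arbitrary (not necessarily Gaussian) centered $\mu, \nu$. To handle this I would revisit the rescaled-convolution step from the proof of Proposition \ref{prop:ExtremizedImpliesGaussian}(ii): combined with the reduction to symmetric measures in part (i) and the entropic central limit theorem, any extremizing pair must converge to centered Gaussians with the same covariances as $\mu$ and $\nu$, and the preceding one-variable calculation then identifies the extremal configuration as $(N(0, C), N(0, C^{-1}))$.
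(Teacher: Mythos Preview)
The paper does not prove this lemma; it is simply quoted from Fathi \cite{MF} (with \cite{CFM} cited for a short alternative proof). So your proposal is not a comparison target but a genuine alternative argument, and it is worth assessing on its own terms.

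Your derivation of the \emph{inequality} is correct and clean. The rewriting of \eqref{eq:symmTalagrand} as an instance of \eqref{eq:mmEntropy} with $n=2$, $c_1=c_2=1$, and the off-diagonal $Q$ is exactly right, and the eigenvalue reduction to $\sqrt{\lambda_i}-1-\log\sqrt{\lambda_i}\geq 0$ correctly identifies $D_g(\mathbf{c},Q)=d\log(2\pi)$ together with the Gaussian equality configuration $K_2=K_1^{-1}$. This is essentially the $n=2$ specialization of the paper's own remark preceding Section~2.2, where Theorem~\ref{thm:multimarginal} is shown to follow from Theorem~\ref{thm:GaussianSaturation} by computing $D_g$; you have carried out that computation explicitly. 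There is no circularity, since the proof of Theorem~\ref{thm:GaussianSaturation} in Section~3 nowhere invokes Lemma~\ref{lem:SymmTal}.

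The genuine gap is in the equality characterization. The rescaled-convolution argument of Proposition~\ref{prop:ExtremizedImpliesGaussian}(ii) shows only that if $(\mu,\nu)$ is an extremizing pair, then the iterated self-convolutions converge (by the CLT) to Gaussians $(N(0,K_1),N(0,K_2))$ which are \emph{also} extremizers; combined with your scalar computation this forces $K_2=K_1^{-1}$ on the covariances. But it does not force $\mu$ and $\nu$ themselves to be Gaussian. Concretely, one obtains
\[
\bigl(h(N(0,K_1))-h(\mu)\bigr)+\bigl(h(N(0,K_1^{-1}))-h(\nu)\bigr)=d-\sup_{\pi}\EE[\langle X,Y\rangle],
\]
and all three nonnegative quantities could in principle be positive simultaneously. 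To close this you need an additional rigidity step: for instance, trace equality through the doubling chain \eqref{eq:doublingSym} to conclude that the conditional law of $X^-$ given $X^+$ must itself be an extremizer with covariance $K_1$ \emph{almost surely}, which (after the covariance identification) forces the conditional covariance to be constant and hence $X^+\perp X^-$; then Bernstein's characterization of the Gaussian (independence of $X+X'$ and $X-X'$ for i.i.d.\ $X,X'$) finishes the job. Alternatively, one can invoke the known equality cases of the functional Blaschke--Santal\'o inequality via the duality in Theorem~\ref{thm:EntropicDuality}. Either way, the sentence ``any extremizing pair must converge to centered Gaussians'' is not by itself enough.
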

We remark that the symmetrized Talagrand inequality coincides with  Theorem   \ref{thm:multimarginal} in the special case where $n=2$.  Thus, Theorem  \ref{thm:multimarginal} may be regarded as its natural  multimarginal generalization.

The second  lemma we shall need is the known displacement convexity property enjoyed by Wasserstein barycenters  \cite[Proposition 7.7]{AC}.  The statement below follows from  combining \cite[Eq. (6.4)]{KW}  with \eqref{barycenterMultimarginalEquivalence}.  
\begin{lemma}[Displacement convexity of Wasserstein barycenters]
Let $X_i \sim \mu_i \in \mathcal{P}(\R^d)$, $i=1, \dots ,n $ be centered, and let $\lambda_i >  0$ satisfy $\sum_{i=1}^n \lambda_i = 1$.  If $\bar{\mu}$ denotes the  Wasserstein barycenter of $(\mu_i)_{i=1}^n$ for weights $(\lambda_i)_{i=1}^n$, then 
\begin{align}
\sum_{i=1}^n \lambda_i D(\mu_i\| \gamma )  &\geq  
  D(\bar{\mu} \| \gamma)  + \frac{1}{2} \inf_{\pi(X_1, \dots, X_{n})} \EE\left[  \sum_{1 \leq i < j \leq n} \lambda_i \lambda_j |X_i - X_j|^2  \right] .  \label{eq:dispConvexityGen}
\end{align}
\end{lemma}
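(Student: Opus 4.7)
The plan is to reduce \eqref{eq:dispConvexityGen} to a purely entropic displacement convexity statement. Writing
\[
D(\mu \| \gamma) \ =\ -h(\mu) + \tfrac{1}{2}\int |x|^2\, d\mu + \tfrac{d}{2}\log(2\pi)
\]
and substituting this decomposition into both sides of \eqref{eq:dispConvexityGen}, the constants $\tfrac{d}{2}\log(2\pi)$ cancel, and the elementary identity
\[
\sum_{i=1}^n \lambda_i |x_i|^2 - \Bigl|\sum_{i=1}^n \lambda_i x_i\Bigr|^2 \ =\ \sum_{1 \leq i < j \leq n} \lambda_i \lambda_j |x_i - x_j|^2,
\]
taken in expectation under the optimal multimarginal coupling $\pi^\star$ of \eqref{barycenterMultimarginalEquivalence} (for which $\sum_i \lambda_i X_i \sim \bar\mu$), makes the quadratic terms match up exactly. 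The inequality therefore reduces to showing
\[
\sum_{i=1}^n \lambda_i h(\mu_i) \ \leq\ h(\bar\mu).
\]

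To establish this, I would take $\bar\mu$ itself as the reference measure and let $T_i$ denote the Brenier optimal transport map from $\bar\mu$ to $\mu_i$. A defining characterization of the Wasserstein barycenter is that $\sum_{i=1}^n \lambda_i T_i = \id$ holds $\bar\mu$-almost everywhere (see e.g.\ \cite{AC}), so $\sum_i \lambda_i DT_i = I$ a.e. Since each $DT_i$ is symmetric positive semidefinite a.e.\ (as the Hessian of a convex potential), concavity of $A \mapsto \log\det A$ on the positive semidefinite cone gives
\[
\sum_{i=1}^n \lambda_i \log\det DT_i(y) \ \leq\ \log\det\Bigl(\sum_{i=1}^n \lambda_i DT_i(y)\Bigr) \ =\ 0 \qquad \text{for a.e.\ } y.
\]
Integrating against $\bar\mu$ and invoking the Monge--Amp\`ere change-of-variables identity $h(\mu_i) = h(\bar\mu) + \int \log\det DT_i\, d\bar\mu$ yields $\sum_i \lambda_i h(\mu_i) \leq h(\bar\mu)$, which closes the argument.

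The main obstacle is justifying the change of variables: Brenier maps are only defined almost everywhere and are not a priori classically differentiable, so one must either invoke Alexandrov-type second-order differentiability of the Brenier potentials (McCann's theorem) along with the a.e.\ Monge--Amp\`ere formula for convex potentials, or instead proceed by approximation, first proving the claim for smooth compactly supported log-concave $(\mu_i)_{i=1}^n$ (where everything is classical), and then extending to general centered $\mu_i \in \mathcal{P}(\R^d)$ using lower semicontinuity of Shannon entropy along $W_2$ convergence with uniform second-moment control, together with stability of the multimarginal transport cost under such convergence. Alternatively, as the surrounding text suggests, one can simply cite \cite[Eq.~(6.4)]{KW} and combine it with the equivalence \eqref{barycenterMultimarginalEquivalence}, bypassing the regularity step entirely.
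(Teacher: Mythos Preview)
The paper does not prove this lemma at all: it is stated as a known result, with the one-line justification that it ``follows from combining \cite[Eq.~(6.4)]{KW} with \eqref{barycenterMultimarginalEquivalence}.'' You explicitly identify this citation route in your final sentence, so in that sense your proposal already contains the paper's approach. What you add beyond the paper is an unpacking of the cited displacement-convexity inequality: your reduction (via the decomposition of $D(\cdot\|\gamma)$ and the variance identity under the optimal multimarginal coupling) to the pure entropic statement $\sum_i \lambda_i h(\mu_i) \leq h(\bar\mu)$ is correct, and your Brenier-map argument for the latter is the standard proof going back to \cite[Proposition~7.7]{AC}. One small slip in your obstacle discussion: Shannon entropy is \emph{upper} semicontinuous (not lower) along $W_2$-convergent sequences with converging second moments, as the paper itself uses in the proof of Corollary~\ref{cor:DsEqualsDg}; this is the direction needed to control $h(\bar\mu^{(k)})$ from above in an approximation scheme, though making the entropies $h(\mu_i^{(k)})$ converge from below requires a careful choice of approximation (e.g., mollification) rather than semicontinuity alone.
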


\noindent We are now ready to prove Theorem \ref{thm:multimarginal}.
\begin{proof}[Proof of Theorem \ref{thm:multimarginal}]
The proof is by induction on $n$.  The case $n=1$ is degenerate, and the case $n=2$ is precisely the symmetrized Talagrand inequality.  So, assume the statement is true for some $n\geq 2$, and we'll prove it for $n+1$.  For convenience, define $\bar{p} := 1-p$ for $p\in [0,1]$.

Put $\lambda'_i = \lambda_i / \bar{\lambda}_{n+1}$ for $i=1,\dots,n$,  and let $\bar{\mu}$ denote the Wasserstein barycenter of $(\mu_i)_{i=1}^n$ for weights $(\lambda'_i)_{i=1}^n$.  Observe that, for $X_i \sim \mu_i$, $i=1,\dots, n+1$, we have 
\begin{align}
&\frac{1}{2} \inf_{\pi(X_1, \dots, X_{n+1})} \EE\left[  \sum_{1 \leq i < j \leq n+1} \lambda_i \lambda_j  |X_i - X_j|^2  \right] \notag\\
&\leq  \frac{\bar{\lambda}_{n+1} \lambda_{n+1} }{2}W_2(\mu_{n+1},\bar{\mu})^2 + \frac{\bar{\lambda}_{n+1}}{2} \inf_{\pi(X_1, \dots, X_{n})} \EE\left[  \sum_{1 \leq i < j \leq n}  \lambda'_i \lambda'_j |X_i - X_j|^2  \right]  \label{ineq1} \\
&\leq \bar{\lambda}_{n+1} \lambda_{n+1}  D(\mu_{n+1} \| \gamma) + \bar{\lambda}_{n+1} \lambda_{n+1}  D(\bar{\mu}  \| \gamma) +  \frac{\bar{\lambda}_{n+1}}{2} \inf_{\pi(X_1, \dots, X_{n})} \EE\left[  \sum_{1 \leq i < j \leq n} \lambda'_i \lambda_j '|X_i - X_j|^2  \right]  \label{ineq2} \\
&\leq \bar{\lambda}_{n+1} \lambda_{n+1}   D(\mu_{n+1} \| \gamma)  +  \lambda_{n+1}   \sum_{i=1}^n \lambda_i D (\mu_i \|\gamma)  +  \frac{\bar{\lambda}_{n+1}^2  }{2} \inf_{\pi(X_1, \dots, X_{n})} \EE\left[  \sum_{1 \leq i < j \leq n} \lambda'_i \lambda_j '  |X_i - X_j|^2  \right]  \label{ineq3}\\
&\leq \bar{\lambda}_{n+1} \lambda_{n+1}   D(\mu_{n+1} \| \gamma)  +  \lambda_{n+1}   \sum_{i=1}^n \lambda_i D(\mu_i \|\gamma)  +   \bar{\lambda}_{n+1}^2   \sum_{i=1}^n \lambda'_i \bar{\lambda'_i}D(\mu_i\|\gamma)  \label{ineq4}\\
&= \bar{\lambda}_{n+1} \lambda_{n+1}   D(\mu_{n+1} \| \gamma) +    \sum_{i=1}^n \bar{\lambda}_i \lambda_i D (\mu_i \|\gamma)  .  \notag
\end{align}
Inequality \eqref{ineq1} is the elementary identity for quadratic forms
$$
  \sum_{1 \leq i < j \leq n+1}  \lambda_i \lambda_j |x_i - x_j|^2  = 
\bar{\lambda}_{n+1} \lambda_{n+1}  |x_{n+1} - \bar{x}|^2 + \bar{\lambda}_{n+1}  \sum_{1 \leq i < j \leq n}  \lambda'_i \lambda'_j  |x_i - x_j|^2, ~~\mbox{where $\bar{x} := \sum_{i=1}^n \lambda'_i x_i$},
$$
combined with the fact that the RHS corresponds to restricting the set of couplings of $X_1,\dots,X_{n+1}$ on the LHS to those where $\EE\left[  \sum_{1 \leq i < j \leq n} \lambda'_i \lambda'_j |X_i - X_j|^2  \right]$ is minimized (here, we recall the connection between the barycenter $\bar{\mu}$ and the optimal coupling for this multimarginal transport problem).  Inequality \eqref{ineq2} is the symmetrized Talagrand inequality \eqref{eq:symmTalagrand} applied to $W_2(\mu_{n+1},\bar{\mu} )^2$.  Inequality \eqref{ineq3} is an application of \eqref{eq:dispConvexityGen} with weights $(\lambda_i')_{i=1}^n$, and \eqref{ineq4} is the inductive hypothesis. 

For $n=2$, the equality cases are those of the symmetrized Talagrand inequality, and are known.  For $n=3$, the fourth inequality above is the symmetrized Talagrand inequality applied to measures $\mu_1,\mu_2$.  So, for equality, we must have $\mu_1 = N(0,C)$ and $\mu_2= N(0,C^{-1})$ by Lemma \ref{lem:SymmTal}.  However, by symmetry of the argument, we must have the same inverse relationship between the covariances of any pair $\mu_i,\mu_j$.  Hence, the only possibility is that $C = \id$.  For $n\geq 4$, we proceed by induction.  Again, the fourth inequality in the above argument shows that in the case of $(n+1)$ marginals, equality implies any (and therefore every) subset of $n$ distinct marginals must be standard normal by the assumed equality case for $n$, so the claim follows for $n+1$.
\end{proof}

\subsection*{Acknowledgments}
T.C. thanks D.~Cordero--Erausquin and A.~Eskenazis for stimulating conversations at the Hausdorff Research Institute for Mathematics, during the Fall 2024 program on Information theory, Boolean functions, and lattice problems where some of these results were presented.

\subsection*{Funding} Parts of this work were supported by NSF CCF-1750430.

{\small

}


\begin{thebibliography}{99}
 \setlength{\itemsep}{1mm}
    \setlength{\parskip}{0em}
    \bibitem{AC} M.~Agueh and G.~Carlier. Barycenters in the Wasserstein space. SIAM Journal on Mathematical Analysis 43.2 (2011): 904-924.

\bibitem{Bor1975}  C. Borell, ``Convex measures on locally convex spaces,'' \emph{Ark. Mat.}, vol.~12, pp.~239--252, 1974.


    
    \bibitem{carlen2009subadditivity}
E.~A. Carlen and D.~Cordero-Erausquin.
\newblock Subadditivity of the entropy and its relation to {B}rascamp--{L}ieb
  type inequalities.
\newblock {\em Geometric and Functional Analysis}, 19(2):373--405, 2009.

\bibitem{CFM} T.~A. Courtade,  M.~Fathi, and  D.~Mikulincer. Stochastic proof of the sharp symmetrized Talagrand inequality. Comptes Rendus. Math\'ematique, Volume 362 (2024), pp. 1779-1784. doi : 10.5802/crmath.681.

\bibitem{CL21}T.~A. Courtade and J. Liu.  Euclidean forward–reverse Brascamp–Lieb inequalities: Finiteness, structure, and extremals. The Journal of Geometric Analysis 31.4 (2021): 3300-3350.

\bibitem{AKM} S.~Artstein-Avidan, B.~Klartag, and V.~Milman.
The Santaló point of a function, and a functional form of the Santaló inequality. 
\textit{Mathematika }51, No. 1-2, 33-48 (2004). 

\bibitem{Ball} K. Ball. Isometric problems in $\ell^p$ and sections of convex sets, Ph.D.~thesis, University of Cambridge. 1986. 
 
 

\bibitem{MF} M.~Fathi. A sharp symmetrized form of Talagrand’s transport-entropy inequality for the Gaussian measure. \textit{Electron. Commun. Probab.} 23: 1--9, 2018.  

 

\bibitem{GC} W.~Gangbo  and A.~\'{S}wi\c{e}ch. Optimal maps for the multidimensional Monge‐-Kantorovich problem. Communications on Pure and Applied Mathematics: A Journal Issued by the Courant Institute of Mathematical Sciences 51.1 (1998): 23-45.

\bibitem{G07}
N.~Gozlan,  Characterization of {T}alagrand's like transportation-cost
  inequalities on the real line, \emph{Journal of Functional Analysis}
  \textbf{250} (2007),   2, p.~400-425.
  
\bibitem{GL} N. Gozlan and C. L\'eonard. Transport inequalities. A survey. \textit{Markov Proc. Rel. Fields}, 2010, 16, pp. 635--736.

\bibitem{KW} A. V.~Kolesnikov and E. M.~Werner. 
Blaschke-Santaló inequality for many functions and geodesic barycenters of measures. 
\textit{Adv. Math.} 396, Article ID 108110, 44 p. (2022). 

 \bibitem{Lieb90} E.~H.~Lieb, Gaussian kernels have only Gaussian maximizers, Invent. Math. 102 (1990), 179–208.

 \bibitem{LCCV} J. Liu, T.~A.~Courtade, P.~W.~Cuff and S.~Verd\'u. A forward-reverse Brascamp-Lieb inequality: Entropic duality and Gaussian optimality. Entropy, 20(6), 418 (2018).
 
 \bibitem{LCCV2} J. Liu, T.~A.~Courtade, P.~W.~Cuff and S.~Verd\'u.  Information-theoretic perspectives on Brascamp-Lieb inequality and its reverse. arXiv preprint arXiv:1702.06260, 2017.
 
\bibitem{NT} S. Nakamura and H. Tsuji. A generalized Legendre duality relation and Gaussian saturation. {arXiv:2409.13611v2}, 2024. 

\bibitem{PC}
G.~Peyr\'e and M.~Cuturi, Computational Optimal Transport: With Applications to Data Science, Foundations and Trends in Machine Learning: Vol. 11: No. 5-6 (2019), pp 355-607. 

\bibitem{Santalo} L.~A. Santal\'o,  Un invariante afin para los cuerpos convexos del espacio
  de $n$ dimensiones , \emph{Portugaliae mathematica} \textbf{8} (1949),
   4, p.~155-161.
  
\bibitem{Tal} M. Talagrand. Transportation cost for Gaussian and other product measures. \textit{Geom. Funct. Anal.} 6 (1996) 587--600. 

\bibitem{Vil2} C. Villani. Optimal transport: old and new. Vol. 338. Berlin: Springer, 2009. 

\end{thebibliography}
\end{document}